\documentclass[reqno]{amsart}
\usepackage[hidelinks]{hyperref}
\usepackage{amssymb}
\usepackage{enumerate}

\begin{document}

\title[Rayleigh-Stokes equations with delays...]
{On stability for semilinear generalized Rayleigh-Stokes equation involving delays}

\author[T.D. Ke, D.Lan, P.T. Tuan]{Tran Dinh Ke, Do Lan *, Pham Thanh Tuan}

\address{Tran Dinh Ke  \hfill\break
Department of Mathematics, Hanoi National University of Education \hfill\break
136 Xuan Thuy, Cau Giay, Hanoi, Vietnam}
\email{ketd@hnue.edu.vn} 

\address{Do Lan\hfill\break
Faculty of Computer Engineering and Science, Thuyloi University \hfill\break
175 Tay Son, Dong Da, Hanoi, Vietnam}
\email{dolan@tlu.edu.vn (D. Lan)}

\address{Pham Thanh Tuan  \hfill\break
Department of Mathematics, Hanoi Pedagogical University 2,\hfill\break
Xuan Hoa, Phuc Yen, Vinh Phuc, Vietnam}
\email{phamthanhtuan@hpu2.edu.vn}

\subjclass[2010]{35B40,35R11,35C15,45D05,45K05}
\keywords{Rayleigh-Stokes problem; stability; nonlocal PDE}
\thanks{* Corresponding author. Email: dolan@tlu.edu.vn (D. Lan)}
\maketitle
\numberwithin{equation}{section}
\newtheorem{theorem}{Theorem}[section]
\newtheorem{lemma}[theorem]{Lemma}
\newtheorem{proposition}[theorem]{Proposition}
\newtheorem{corollary}[theorem]{Corollary}
\newtheorem{definition}{Definition}[section]
\newtheorem{remark}{Remark}[section]

\begin{abstract}
We consider a functional semilinear Rayleigh-Stokes equation involving fractional derivative. Our aim is to analyze some circumstances, in those the global solvability, and asymptotic behavior of solutions are addressed. By establishing a Halanay type inequality, we show the dissipativity and asymptotic stability of solutions to our problem. In addition, we prove the existence of a compact set of decay solutions by using local estimates and fixed point arguments.
\end{abstract}

\section{Introduction}
Let $\Omega\subset \mathbb R^d$ be a bounded domain with smooth boundary $\partial\Omega$. Consider the following problem
\begin{align}
\partial_t u - (1+\gamma\partial^\alpha_t)\Delta u & = f(t, u_\rho)\;\text{ in }\Omega, t>0,\label{e1}\\
u & = 0\; \text{ on } \partial\Omega,\; t\ge 0,\label{e2}\\
u(x, s) & = \xi(x,s), \; x\in \Omega, s\in [-\tau,0],\label{e3}
\end{align}
where $\gamma>0$, $\alpha\in (0,1)$, $\partial_t=\frac{\partial}{\partial t}$, $\partial_t^\alpha$ stands for the Riemann-Liouville derivative of order $\alpha$ defined by
$$
\partial_t^\alpha v(t) = \frac{d}{dt}\int_0^t g_{1-\alpha}(t-s)v(s)ds,
$$
where $g_\beta(t) =\dfrac{t^{\beta-1}}{\Gamma(\beta)}$ for $\beta>0, t>0$. In this model, $u_\rho$ is defined by $u_\rho(x,t)=u(x,t-\rho(t))$ with $\rho$ being a continuous function on $\mathbb R^+$ such that $-\tau\le t-\rho(t)\le t$ and $\lim\limits_{t\to\infty}(t-\rho(t))=\infty$, $f:\mathbb R^+\times L^2(\Omega)\to L^2(\Omega)$ is a nonlinear map and $\xi\in C([-\tau,0];L^2(\Omega))$ is given.

Equation \eqref{e1} arose in a generalized Rayleigh-Stokes problem, where its constitution was given in \cite{FJFV09, STZM06}. This type of equation is employed to describe the behavior of flow of non-Newtonian fluids occupied in cylinders. In this model, the term of fractional derivative gives a significant description for the viscoelasticity of fluids under examination.

As a matter of fact, various numerical methods have been developed for solving Rayleigh-Stokes problem in linear case, see e.g. \cite{Bazh15,Bi18,Chen13,Chen08,SSM18,Zaky18}. We also mention the analytic representation for solution of  this problem in  \cite{FJFV09,Khan09,STZM06,XN09,ZBF07}. Recently, some inverse problems involving \eqref{e1} has been addressed in \cite{Luc19,Ngoc21,Tuan19}, where the state function is identified from its terminal value. In the case of non-delayed, i.e. $f=f(u)$, the regularity and stability of solution to Rayleigh-Stokes equation has been analyzed in the recent work \cite{Lan21}.

In the present work, we concern with the nonlinear model, where the nonlinearity $f$ contains a delayed term, in order to describe the situation that the external force depends on history state of the system. It is worth noting that, the appearance of delayed term may reduce the performance and routinely affect the stability of the system. A typical example for the delayed term is that, $\rho(t)=(1-q)t+\tau$, $u_\rho(x,t)=u(x,qt-\tau)$, for some $q\in (0,1]$, which is a proportional delay. For this model, the long-time behavior of solutions is an issue that has not been addressed in literature, and we aim at closing this gap. We first prove that, the problem is globally solvable in both cases when $f$ has a linear or superlinear growth. Then we analyze some sufficient conditions ensuring the dissipativity and asymptotic stability for our system. Finally, we show the existence of a compact set of decay solutions in the case $f$ is of superlinear.

Our work is as follows. In the next section, we recall typical properties of relaxation function and prove a Halanay type equality for using in stability analysis. Section 3 is devoted to proving the solvability and stability results. In the last section, we present the existence of a compact set of decay solutions to our problem.

\section{Preliminaries}
We first give a representation of solutions to \eqref{e1}-\eqref{e3} by using a resolvent operator. Consider the relaxation problem
\begin{align}
\omega'(t) + \mu (1+\gamma\partial_t^\alpha) \omega (t) & = 0,\; t> 0,\label{re1}\\
\omega (0) & = 1,\label{re2}
\end{align}
where the unknown $\omega$ is a scalar function, $\mu$ and $\gamma$ are positive parameters. We collect some properties of $\omega$ in the following proposition.
\begin{proposition}\cite{Lan21}\label{pp-relax-func}
Let $\omega$ be the solution of \eqref{re1}-\eqref{re2}. Then
\begin{enumerate}
\item $0<\omega (t)\le 1$ for all $t\ge 0$.
\item The function $\omega$ is completely monotone for $t\ge 0$, i.e. $(-1)^n \omega^{(n)}(t)\ge 0$ for $t\ge 0$ and $n\in\mathbb N$. Consequently, $\omega$ is a nonincreasing function.
\item $\mu \omega(t) \le (t+g_{2-\alpha}(t))^{-1} \le \min\{t^{-1}, t^{\alpha-1}\},$ for all $t>0$.
\item $\displaystyle \int_0^t \omega(s)ds\le \mu^{-1}(1-\omega(t))$, for any $t>0$.
\item For fixed $t\ge 0$ and $\gamma>0$, the function $\mu\mapsto \omega(t, \mu)$ is nonincreasing on $[0,\infty)$.
\end{enumerate}
\end{proposition}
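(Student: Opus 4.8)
The plan is to pass to the Laplace transform, under which the linear convolution problem \eqref{re1}--\eqref{re2} becomes algebraic. Writing $\widehat\omega(s)=\int_0^\infty e^{-st}\omega(t)\,dt$ and using $\mathcal L[\partial_t^\alpha\omega](s)=s^\alpha\widehat\omega(s)$ (the boundary term $(g_{1-\alpha}*\omega)(0)$ vanishes), equation \eqref{re1} together with $\omega(0)=1$ gives
\[
\widehat\omega(s)=\frac{1}{s+\mu+\mu\gamma s^\alpha}.
\]
The denominator $\psi(s)=s+\mu+\mu\gamma s^\alpha$ is a complete Bernstein function, being a sum of the complete Bernstein functions $s\mapsto s$, the constant $\mu$, and $s\mapsto\mu\gamma s^\alpha$ (the last since $\alpha\in(0,1)$). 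Hence $1/\psi$ is a Stieltjes function, i.e. the Laplace transform of a completely monotone function, and inversion yields part (2); taking $n=1$ shows $\omega'\le0$, so $\omega$ is nonincreasing. I regard establishing complete monotonicity --- verifying that $\psi$ lies in the correct subclass of Bernstein functions and invoking the associated inverse-transform machinery --- as the main obstacle.

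With part (2) in hand, part (1) is short. Write $\omega(t)=\int_0^\infty e^{-rt}\,d\nu(r)$ with $\nu\ge0$; since $\omega(0)=1$ the measure has total mass $\nu(\mathbb R^+)=1$, so $\omega(t)>0$ for every $t$ (were $\omega(t_0)=0$, positivity of $e^{-rt_0}$ would force $\nu\equiv0$). Monotonicity and $\omega(0)=1$ then give $\omega(t)\le1$.

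For parts (3) and (4) I would integrate \eqref{re1} over $[0,t]$, using $\int_0^t\partial_t^\alpha\omega\,ds=(g_{1-\alpha}*\omega)(t)$, to reach the Volterra identity
\[
\omega(t)+\mu\int_0^t\omega(s)\,ds+\mu\gamma\,(g_{1-\alpha}*\omega)(t)=1.
\]
Part (4) follows at once by discarding the nonnegative convolution term: $\mu\int_0^t\omega(s)\,ds\le1-\omega(t)$. For part (3), monotonicity of $\omega$ supplies the lower bounds $\int_0^t\omega(s)\,ds\ge t\,\omega(t)$ and $(g_{1-\alpha}*\omega)(t)\ge\omega(t)\int_0^tg_{1-\alpha}(r)\,dr=\omega(t)\,g_{2-\alpha}(t)$; inserting these into the identity and dropping the leading term $\omega(t)\ge0$ yields the first inequality of (3). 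The second inequality is elementary: $t+g_{2-\alpha}(t)\ge t$, and since $\Gamma(2-\alpha)\le1$ for $\alpha\in(0,1)$ one has $g_{2-\alpha}(t)\ge t^{1-\alpha}$, whence $(t+g_{2-\alpha}(t))^{-1}\le\min\{t^{-1},t^{\alpha-1}\}$.

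Finally, for the monotone dependence on the parameter in part (5), I would differentiate the problem in $\mu$ (justified by smooth dependence of the resolvent on $\mu$). Setting $v=\partial_\mu\omega$ and differentiating \eqref{re1}--\eqref{re2} gives $v'+\mu(1+\gamma\partial_t^\alpha)v=-(1+\gamma\partial_t^\alpha)\omega=\mu^{-1}\omega'$ with $v(0)=0$, the last equality being \eqref{re1} itself. The same transform computation as above represents the solution through the resolvent, $v(t)=\mu^{-1}\int_0^t\omega(t-s)\,\omega'(s)\,ds$; since $\omega\ge0$ by part (1) and $\omega'\le0$ by part (2), this convolution is nonpositive, so $\partial_\mu\omega(t,\mu)\le0$, which is the claim.
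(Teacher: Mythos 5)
First, a point of comparison: the paper itself contains no proof of this proposition --- it is quoted verbatim from \cite{Lan21} --- so your argument has to be judged on its own merits rather than against an in-paper proof. Most of it is sound and follows the natural route for this kernel. The transform formula $\widehat\omega(s)=(s+\mu+\mu\gamma s^\alpha)^{-1}$ is correct; the complete Bernstein/Stieltjes argument does give complete monotonicity, provided you also note that $\lim_{s\to\infty}1/\psi(s)=0$ and $\lim_{s\to 0}s/\psi(s)=0$, which kills the constant and $a/s$ terms in the Stieltjes representation so that $1/\psi$ is genuinely the Laplace transform of a completely monotone function (one also needs the standard remark that the solution of \eqref{re1}-\eqref{re2} is unique, hence coincides with this inverse transform). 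Part (1) then follows as you say, the integrated Volterra identity $\omega(t)+\mu\int_0^t\omega(s)\,ds+\mu\gamma(g_{1-\alpha}*\omega)(t)=1$ is correct and yields (4) at once, and your treatment of (5) is right in substance; the differentiation in $\mu$ is the only soft spot there, and it can be avoided entirely by applying the variation-of-constants formula to the finite difference $w=\omega(\cdot,\mu_2)-\omega(\cdot,\mu_1)$, which satisfies $w'+\mu_2(1+\gamma\partial_t^\alpha)w=(\mu_2-\mu_1)\mu_1^{-1}\omega'(\cdot,\mu_1)$ with $w(0)=0$, giving $w\le 0$ by the same sign argument.

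The genuine gap is in part (3). Your lower bounds $\int_0^t\omega(s)\,ds\ge t\,\omega(t)$ and $(g_{1-\alpha}*\omega)(t)\ge\omega(t)g_{2-\alpha}(t)$ produce $1\ge\mu\omega(t)\bigl(t+\gamma g_{2-\alpha}(t)\bigr)$, i.e. $\mu\omega(t)\le\bigl(t+\gamma g_{2-\alpha}(t)\bigr)^{-1}$: the parameter $\gamma$ multiplies $g_{2-\alpha}$ and cannot be dropped, yet you assert this ``yields the first inequality of (3)'', where $g_{2-\alpha}$ appears without $\gamma$. For $\gamma\ge 1$ your bound is stronger than the stated one and the identification is harmless, but for $\gamma<1$ it is strictly weaker --- and in fact the inequality as printed is false in general: by Gronwall applied to the Volterra identity, $\omega(\cdot,\mu,\gamma)\to e^{-\mu t}$ uniformly on compact intervals as $\gamma\to 0^+$, and at $t=1/\mu$ one gets $\mu\omega(t)\bigl(t+g_{2-\alpha}(t)\bigr)\to e^{-1}\bigl(1+\mu^\alpha/\Gamma(2-\alpha)\bigr)$, which exceeds $1$ once $\mu^\alpha>(e-1)\Gamma(2-\alpha)$. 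So the honest output of your computation is the $\gamma$-dependent bound $\mu\omega(t)\le\bigl(t+\gamma g_{2-\alpha}(t)\bigr)^{-1}\le\max\{1,\Gamma(2-\alpha)/\gamma\}\min\{t^{-1},t^{\alpha-1}\}$; the statement as printed (presumably inheriting a normalization or typo from \cite{Lan21}) holds only under a restriction such as $\gamma\ge 1$, and a correct write-up must flag this discrepancy rather than silently equate the two inequalities.
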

Denote by $\omega(\cdot, \mu)$ the solution of \eqref{re1}-\eqref{re2}, respecting to parameter $\mu$. In what follows, we use the notation $u*v$ to express the Laplace convolution of $u$ and $v$, i.e.
$$
(u*v)(t) = \int_0^t u(t-s)v(s)ds,\; u,v\in L^1_{loc}(\mathbb R^+).
$$
We now concern with the inhomogeneous problem
\begin{align}
v'(t) + \mu(1+\gamma\partial_t^\alpha)v(t) & = g(t), t>0,\label{re3}\\
v(0) & = v_0,\label{re4}
\end{align}
where $\mu>0$, $\gamma>0$ and $g$ is a continuous function. The representation of $v$ is given in the next proposition.
\begin{proposition}\cite{Lan21}\label{pp-relax-func3}
The solution of \eqref{re3}-\eqref{re4} is given by
$$
v(t) = \omega(t,\mu)v_0 + \omega(\cdot,\mu)*g(t), \; t\ge 0,
$$
where $\omega$ is defined by \eqref{re1}-\eqref{re2}.
\end{proposition}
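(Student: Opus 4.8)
The plan is to use the Laplace transform, which turns this linear fractional integro-differential problem into an algebraic one. Writing $\widehat{\varphi}(\lambda)=\int_0^\infty e^{-\lambda t}\varphi(t)\,dt$ for the transform, I would first record the rules needed: $\widehat{g_\beta}(\lambda)=\lambda^{-\beta}$ for the kernel $g_\beta(t)=t^{\beta-1}/\Gamma(\beta)$, the convolution rule $\widehat{u*v}=\widehat u\,\widehat v$, and $\widehat{\varphi'}(\lambda)=\lambda\widehat\varphi(\lambda)-\varphi(0)$. The key computation is the transform of the Riemann--Liouville derivative. Since $\partial_t^\alpha v=\frac{d}{dt}(g_{1-\alpha}*v)$ and $\widehat{g_{1-\alpha}*v}(\lambda)=\lambda^{\alpha-1}\widehat v(\lambda)$, the differentiation rule gives
$$
\widehat{\partial_t^\alpha v}(\lambda)=\lambda^\alpha\widehat v(\lambda)-(g_{1-\alpha}*v)(0^+).
$$
I would then argue that $(g_{1-\alpha}*v)(0^+)=0$: for continuous (hence locally bounded) $v$ the quantity $\int_0^t \frac{(t-s)^{-\alpha}}{\Gamma(1-\alpha)}v(s)\,ds$ is bounded by $\sup_{[0,t]}|v|\cdot g_{2-\alpha}(t)$, which tends to $0$ as $t\to 0^+$. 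Hence $\widehat{\partial_t^\alpha v}(\lambda)=\lambda^\alpha\widehat v(\lambda)$.

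Applying these rules to \eqref{re3}--\eqref{re4} yields the algebraic identity
$$
(\lambda+\mu+\mu\gamma\lambda^\alpha)\,\widehat v(\lambda)=v_0+\widehat g(\lambda),
$$
while the same procedure applied to the homogeneous problem \eqref{re1}--\eqref{re2} gives
$$
\widehat\omega(\lambda,\mu)=\bigl(\lambda+\mu+\mu\gamma\lambda^\alpha\bigr)^{-1}.
$$
Combining the two produces $\widehat v(\lambda)=\widehat\omega(\lambda,\mu)\,v_0+\widehat\omega(\lambda,\mu)\,\widehat g(\lambda)$. Inverting, using linearity and the convolution rule once more, recovers exactly $v(t)=\omega(t,\mu)v_0+(\omega(\cdot,\mu)*g)(t)$, which is the asserted formula.

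The point requiring care is the legitimacy of the transform manipulations rather than the algebra. I would need $v$, the datum $g$, and $\partial_t^\alpha v$ to be Laplace transformable on a common right half-plane; since $g$ is continuous and the resolvent $\omega$ is bounded by $1$ with the decay recorded in Proposition \ref{pp-relax-func}, one can work on a half-plane $\operatorname{Re}\lambda>\lambda_0$ on which all transforms converge, and the uniqueness theorem for Laplace transforms then pins down $v$. To make the argument self-contained one may instead verify directly that the proposed formula solves \eqref{re3}--\eqref{re4}: differentiating $\omega*g$ and applying $\partial_t^\alpha$ to the representation, then using that $\omega$ itself satisfies \eqref{re1}--\eqref{re2}, the inhomogeneous equation is recovered, and the initial condition $v(0)=\omega(0)v_0=v_0$ holds because the convolution term vanishes at $t=0$. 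This verification, together with uniqueness for the linear problem, completes the proof.
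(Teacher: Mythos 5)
The paper offers no proof of this proposition: it is stated with the citation \cite{Lan21} and imported as a known fact, so there is no in-paper argument to compare against. Your Laplace-transform derivation is the standard route to this representation (and is in the spirit of the cited source): the transform of the Riemann--Liouville term is $\lambda^\alpha\widehat v(\lambda)$ precisely because $(g_{1-\alpha}*v)(0^+)=0$ for locally bounded $v$, and the algebra then gives $\widehat v(\lambda)=\widehat\omega(\lambda)\bigl(v_0+\widehat g(\lambda)\bigr)$ with $\widehat\omega(\lambda)=(\lambda+\mu+\mu\gamma\lambda^\alpha)^{-1}$, which inverts to the claimed formula.

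One point in your write-up is not right as stated: continuity of $g$ (and of the unknown $v$) does \emph{not} guarantee Laplace transformability on any half-plane --- a continuous function can grow like $e^{t^2}$ --- and the boundedness of $\omega$ is irrelevant to that issue. Two standard repairs: (i) fix $T>0$ and replace $g$ by its truncation to $[0,T]$ (the solution on $[0,T]$ depends only on $g|_{[0,T]}$), though one then still needs growth control on $v$ itself; or, cleaner, (ii) the direct verification you sketch at the end, which avoids transforms entirely and should be promoted to the main argument. It closes without difficulty: since $(g_{1-\alpha}*\omega)(0)=0$ one has $\partial_t^\alpha(\omega*g)=(\partial_t^\alpha\omega)*g$, and since $\omega(0)=1$ with $\omega'$ locally integrable (it behaves like $t^{-\alpha}$ near $0$) one has $\frac{d}{dt}(\omega*g)=g+\omega'*g$; hence
\begin{align*}
v'+\mu(1+\gamma\partial_t^\alpha)v
&=\bigl[\omega'+\mu\omega+\mu\gamma\partial_t^\alpha\omega\bigr]v_0
+\bigl[\omega'+\mu\omega+\mu\gamma\partial_t^\alpha\omega\bigr]*g+g=g,
\end{align*}
using that $\omega$ solves \eqref{re1}--\eqref{re2}, while $v(0)=v_0$ since the convolution vanishes at $t=0$. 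Combined with uniqueness for the linear problem, this makes your proof complete and self-contained.
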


Let $\{\varphi_n\}_{n=1}^\infty$ be the orthonormal basis of $L^2(\Omega)$ consisting of the eigenfunctions of the Laplacian $-\Delta$ subject to homogeneous Dirichlet boundary condition, that is
$$
-\Delta\varphi_n = \lambda_n\varphi_n\text{ in }\Omega, \; \varphi_n = 0\text{ on }\partial\Omega,
$$
where we can assume that $\{\lambda_n\}_{n=1}^\infty$ is an increasing sequence, $\lambda_n>0$ and $\lambda_n\to +\infty$ as $n\to\infty$. Then one can give a representation of solution to the linear problem
\begin{align}
\partial_t u  - (1+\gamma\partial_t^\alpha)\Delta u & = F \; \text{ in }\Omega, t> 0,\label{le1}\\
u & = 0 \; \text{ on } \partial\Omega, t\ge 0,\label{le2}\\
u(\cdot,0) & = \xi\;\text{ in } \Omega,\label{le3}
\end{align}
where $F\in L^1_{loc}(\mathbb R^+; L^2(\Omega))$ and $\xi\in L^2(\Omega)$. 
Indeed, let 
\begin{align*}
u(x,t) & = \sum_{n=1}^\infty u_n(t)\varphi_n(x),\\
F(x,t) & = \sum_{n=1}^\infty F_n(t)\varphi_n(x), \;\xi(x) = \sum_{n=1}^\infty \xi_n\varphi_n(x).
\end{align*}
Then
$$
u_n'(t) + \lambda_n(1+\gamma\partial_t^\alpha)u_n(t) = F_n(t),\; u_n(0) = \xi_n.
$$
Employing Proposition \ref{pp-relax-func3}, we get
$$
u_n(t) = \omega(t,\lambda_n)\xi_n + \int_0^t \omega(t-s,\lambda_n)F_n(s)ds.
$$
This implies
\begin{equation}\label{eq-sol}
u(\cdot,t) = S(t)\xi + \int_0^t S(t-s)F(\cdot,s)ds,
\end{equation}
where $S(t): L^2(\Omega)\to L^2(\Omega)$ is the resolvent operator defined by
\begin{align}\label{op-sol}
S(t)\xi =  \sum_{n=1}^\infty  \omega(t,\lambda_n)\xi_n\varphi_n.
\end{align}
We recall some properties of the resolvent operator in the following lemma.
\begin{lemma}\cite{Bazh15}\label{lm-op-sol}
For any $v\in L^2(\Omega)$, $T>0$, we have:
\begin{enumerate}
\item $S(\cdot)v\in C([0,T];L^2(\Omega))\cap C((0,T];H^2(\Omega)\cap H_0^1(\Omega))$.
\item $\|S(t)v\|\le \omega(t,\lambda_1)\|v\|$, for all $t\ge 0$. In particular, $\|S(t)\|\le 1$ for all $t\ge 0$.
\item $S(\cdot)v\in C^{(m)}((0,T];L^2(\Omega))$ for all $m\in \mathbb N$, and $\|S^{(m)}(t)v\| \le Ct^{-m}\|v\|$, where $C$ is a positive constant.
\item $\|\Delta S^{(m)}(t)v\|\le Ct^{-m-1+\alpha}\|v\|$ for all $t>0$ and $m\in \mathbb N$.
\end{enumerate}
\end{lemma}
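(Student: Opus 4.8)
The plan is to exploit the fact that $S(t)$ acts diagonally in the orthonormal basis $\{\varphi_n\}$, so that every assertion reduces to a uniform-in-$n$ estimate on the scalar relaxation functions $\omega(t,\lambda_n)$ and their $t$-derivatives. Writing $v_n$ for the Fourier coefficients of $v$, so that $\|v\|^2=\sum_{n\ge1}v_n^2$, Parseval's identity gives $\|S(t)v\|^2=\sum_{n\ge1}\omega(t,\lambda_n)^2v_n^2$. Statement (2) then follows at once from the monotonicity of $\mu\mapsto\omega(t,\mu)$ (Proposition \ref{pp-relax-func}(5)): since $\lambda_n\ge\lambda_1$ we have $\omega(t,\lambda_n)\le\omega(t,\lambda_1)$, whence $\|S(t)v\|\le\omega(t,\lambda_1)\|v\|$, and $\|S(t)\|\le1$ follows from $\omega(t,\lambda_1)\le1$ (Proposition \ref{pp-relax-func}(1)).

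For the $L^2$-continuity in (1) I would fix $t_0\in[0,T]$ and write
$$\|S(t)v-S(t_0)v\|^2=\sum_{n\ge1}\bigl(\omega(t,\lambda_n)-\omega(t_0,\lambda_n)\bigr)^2v_n^2 .$$
Each term tends to $0$ as $t\to t_0$ because $\omega(\cdot,\lambda_n)$ is continuous, and each term is dominated by the summable sequence $4v_n^2$, so dominated convergence yields continuity into $L^2(\Omega)$. For the $H^2$-regularity on $(0,T]$, note that $\Delta S(t)v=-\sum_{n\ge1}\lambda_n\omega(t,\lambda_n)v_n\varphi_n$; by Proposition \ref{pp-relax-func}(3) one has $\lambda_n\omega(t,\lambda_n)\le t^{\alpha-1}$, hence $\|\Delta S(t)v\|^2\le t^{2(\alpha-1)}\|v\|^2<\infty$, which already places $S(t)v$ in $D(\Delta)=H^2(\Omega)\cap H_0^1(\Omega)$ and, incidentally, proves (4) in the case $m=0$. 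Continuity into $H^2$ on a compact subinterval $[\delta,T]$ follows from the same dominated-convergence argument, the bound $\lambda_n\omega(t,\lambda_n)\le\delta^{\alpha-1}$ furnishing the dominating sequence $4\delta^{2(\alpha-1)}v_n^2$.

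The real work lies in (3) and (4) for $m\ge1$, where I need the pointwise derivative bounds
$$|\omega^{(m)}(t,\mu)|\le Ct^{-m},\qquad \mu\,|\omega^{(m)}(t,\mu)|\le Ct^{-m-1+\alpha},$$
with $C$ independent of $\mu\ge\lambda_1$; granting these, the estimates in (3) and (4) follow by summing $\|S^{(m)}(t)v\|^2=\sum_n\omega^{(m)}(t,\lambda_n)^2v_n^2$ and $\|\Delta S^{(m)}(t)v\|^2=\sum_n\lambda_n^2\omega^{(m)}(t,\lambda_n)^2v_n^2$, while the uniform convergence of these differentiated series on $[\delta,T]$ justifies that $S(\cdot)v\in C^{(m)}((0,T];L^2(\Omega))$. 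To obtain the scalar bounds I would pass to the Laplace transform of \eqref{re1}-\eqref{re2}; using that $\mathcal{L}[\partial_t^\alpha\omega](z)=z^\alpha\hat\omega(z)$ (the initial convolution term vanishing since $\omega(0)=1$ is finite) gives
$$\hat\omega(z,\mu)=\frac{1}{z+\mu+\mu\gamma z^\alpha}.$$
One then inverts along a Hankel contour $\mathcal{C}$ consisting of two rays $\arg z=\pm\theta$ with $\theta\in(\tfrac{\pi}{2},\pi)$ and a connecting arc, so that
$$\omega^{(m)}(t,\mu)=\frac{1}{2\pi i}\int_{\mathcal{C}}z^m e^{zt}\hat\omega(z,\mu)\,dz,$$
and estimates $|\hat\omega(z,\mu)|$ and $|\mu\hat\omega(z,\mu)|$ along $\mathcal{C}$; parametrizing $z=re^{\pm i\theta}$ and rescaling $r=s/t$ produces exactly the powers $t^{-m}$ and $t^{-m-1+\alpha}$, with a constant that a direct inspection shows to be uniform in $\mu$. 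This is precisely the analysis carried out in \cite{Bazh15}.

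I expect the contour estimate to be the main obstacle. Complete monotonicity (Proposition \ref{pp-relax-func}(2)) controls only the \emph{signs} of the derivatives $\omega^{(m)}$, not their magnitudes, so the quantitative bounds cannot be read off from Proposition \ref{pp-relax-func} and genuinely require analysing the symbol $\hat\omega(z,\mu)$: one must verify that $z+\mu+\mu\gamma z^\alpha$ does not vanish in the sector bounded by $\mathcal{C}$ (so that the inversion contour can be deformed) and control its modulus from below uniformly in $\mu$ on $\mathcal{C}$. Securing this uniformity in $\mu$ is what makes the term-by-term summation, and hence the interchange of $d^m/dt^m$ with the infinite series defining $S(t)v$, legitimate.
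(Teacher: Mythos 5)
The paper does not prove this lemma at all; it is quoted verbatim from \cite{Bazh15}, and your reconstruction — diagonal action of $S(t)$ on the eigenbasis, parts (1), (2) and the $m=0$ case of (4) read off from Proposition \ref{pp-relax-func}, and the derivative bounds for $m\ge 1$ obtained from the Laplace transform $\hat\omega(z,\mu)=(z+\mu+\mu\gamma z^\alpha)^{-1}$ via Hankel-contour estimates uniform in $\mu$ — is precisely the method of that reference. Your proposal is sound and correctly isolates the one genuinely technical point (the zero-free sector for the symbol and the $\mu$-uniform lower bound on its modulus, which complete monotonicity alone cannot supply), so it matches the cited proof in essentially every respect.
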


Consider the \textit{Cauchy operator} $\mathcal Q: C([0,T];L^2(\Omega))\to C([0,T];L^2(\Omega))$ given by
\begin{equation}\label{cauchy-op}
\mathcal Q(g)(t) = \int_0^t S(t-s)g(s)ds.
\end{equation}
Denote by $\|\cdot\|_\infty$ the sup norm in $C([0,T];L^2(\Omega))$, i.e. $\|g\|_\infty = \sup\limits_{t\in [0,T]}\|g(t)\|$. The following lemma shows the compactness of $\mathcal Q$.
\begin{lemma}\cite{Lan21}\label{lm-cauchy-op}
The Cauchy operator defined by \eqref{cauchy-op} is compact.
\end{lemma}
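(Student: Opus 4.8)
The plan is to establish compactness through the Arzelà--Ascoli theorem in $C([0,T];L^2(\Omega))$. I would fix an arbitrary bounded set $\mathcal B\subset C([0,T];L^2(\Omega))$, say $\|g\|_\infty\le M$ for all $g\in\mathcal B$, and show that $\{\mathcal Q(g):g\in\mathcal B\}$ is (i) pointwise relatively compact in $L^2(\Omega)$ for each $t$, and (ii) equicontinuous on $[0,T]$. The tools I would draw on are exactly the resolvent estimates collected in Lemma \ref{lm-op-sol}: the contraction bound $\|S(t)\|\le 1$ (item (2)), the differentiability bound $\|S'(t)v\|\le Ct^{-1}\|v\|$ (item (3) with $m=1$), and the smoothing bound $\|\Delta S(t)v\|\le Ct^{-1+\alpha}\|v\|$ (item (4) with $m=0$), together with the compact Sobolev embedding $H^2(\Omega)\cap H_0^1(\Omega)\hookrightarrow L^2(\Omega)$.

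For the pointwise relative compactness I would use the parabolic smoothing of $S$. Since $\alpha\in(0,1)$, the exponent $-1+\alpha$ is strictly larger than $-1$, so the kernel $(t-s)^{-1+\alpha}$ is integrable on $[0,t]$. Putting item (4) under the integral sign defining $\mathcal Q$ then gives, for every $t\in[0,T]$,
\begin{equation*}
\|\Delta \mathcal Q(g)(t)\|\le \int_0^t \|\Delta S(t-s)g(s)\|\,ds \le CM\int_0^t (t-s)^{-1+\alpha}\,ds = \frac{CM}{\alpha}\,t^\alpha \le \frac{CM}{\alpha}\,T^\alpha .
\end{equation*}
Hence $\{\mathcal Q(g)(t):g\in\mathcal B\}$ is bounded in $H^2(\Omega)\cap H_0^1(\Omega)$, and the Rellich--Kondrachov compactness of the embedding into $L^2(\Omega)$ makes it relatively compact for each fixed $t$.

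The main obstacle is the equicontinuity, precisely because the derivative estimate for $S$ is singular on the diagonal. For $0\le t_1<t_2\le T$ I would split
\begin{equation*}
\mathcal Q(g)(t_2)-\mathcal Q(g)(t_1)=\int_{t_1}^{t_2} S(t_2-s)g(s)\,ds+\int_0^{t_1}\big[S(t_2-s)-S(t_1-s)\big]g(s)\,ds .
\end{equation*}
The first term has norm at most $M(t_2-t_1)$ by $\|S\|\le 1$. For the second term the crude estimate via $\|S'(\tau)\|\le C\tau^{-1}$ yields a factor $\ln\frac{t_2-s}{t_1-s}$ that fails to be integrable up to $s=t_1$, so a direct bound does not work. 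The key idea is to split the inner integral at $s=t_1-\delta$: on $[t_1-\delta,t_1]$ I use $\|S(t_2-s)-S(t_1-s)\|\le 2$ to bound the contribution by $2M\delta$, while on $[0,t_1-\delta]$ I write $S(t_2-s)-S(t_1-s)=\int_{t_1-s}^{t_2-s}S'(\tau)\,d\tau$ and combine $\|S'(\tau)\|\le C\tau^{-1}$ with $\ln\frac{t_2-s}{t_1-s}\le \frac{t_2-t_1}{t_1-s}\le \frac{t_2-t_1}{\delta}$ to bound that contribution by $CMT(t_2-t_1)/\delta$.

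Finally, given $\eta>0$, I would first fix $\delta$ so that $2M\delta<\eta/2$ and then shrink $|t_2-t_1|$ (depending only on $\delta,M,T$, not on $g$) until the remaining terms sum below $\eta/2$; this gives equicontinuity uniform over $\mathcal B$, the left-endpoint case $t_1=0$ being immediate from $\|\mathcal Q(g)(t)\|\le Mt$. With both Arzelà--Ascoli hypotheses verified, $\mathcal Q(\mathcal B)$ is relatively compact, so $\mathcal Q$ is compact.
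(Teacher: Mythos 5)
Your argument is sound, but note first that this paper does not actually prove Lemma \ref{lm-cauchy-op}: it is quoted from \cite{Lan21} without proof, so there is no internal proof to compare against. On its own merits, your Arzel\`a--Ascoli scheme works: the smoothing bound of Lemma \ref{lm-op-sol}(4) with $m=0$ gives pointwise relative compactness via Rellich--Kondrachov, and your $\delta$-splitting of the second integral gives equicontinuity uniformly over the bounded set $\mathcal B$, with the endpoint cases ($t_1=0$, or $t_1<\delta$) handled by the trivial bound $\|S\|\le 1$. Two remarks. First, ``putting item (4) under the integral sign'' deserves one more line: you need $\mathcal Q(g)(t)\in D(\Delta)=H^2(\Omega)\cap H^1_0(\Omega)$ and $\Delta\mathcal Q(g)(t)=\int_0^t\Delta S(t-s)g(s)\,ds$, which follows from the closedness of the Dirichlet Laplacian together with the absolute convergence of $\int_0^t\|\Delta S(t-s)g(s)\|\,ds$ (Hille's theorem for Bochner integrals), and then elliptic regularity to convert the bound on $\|\Delta\cdot\|$ into $H^2$-boundedness. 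Second, your stated motivation for the $\delta$-split is factually wrong: the factor $\ln\frac{t_2-s}{t_1-s}$ \emph{is} integrable up to $s=t_1$ (it behaves like $-\ln(t_1-s)$, which is integrable), and the direct estimate $\int_0^{t_1}\ln\frac{t_2-s}{t_1-s}\,ds\to 0$ uniformly as $t_2-t_1\to 0$ already suffices; your workaround is nonetheless correct and arguably makes the uniformity more transparent, so this misstatement does not damage the proof. Finally, it is worth observing that the spectral representation \eqref{op-sol} offers an alternative, arguably more natural route here: the bound $\lambda_n\omega(t,\lambda_n)\le t^{\alpha-1}$ from Proposition \ref{pp-relax-func}(3) shows that the spectral truncations $P_N\mathcal Q$ (finite-rank in space) converge to $\mathcal Q$ uniformly on bounded sets, which combined with equicontinuity yields compactness without invoking Sobolev embeddings at all.
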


We are in a position to prove a Halanay type inequality for the stability analysis in the next section.
\begin{lemma}\label{lm-halanay}
Let $v$ be a continuous and nonnegative function satisfying
\begin{align}
v(t) & \le \omega(t,\mu)v_0 + \int_0^t \omega(t-s,\mu)[a \sup_{\zeta\in [s-\rho(s),s]}v(\zeta) + b(s)]ds, \; t>0,\label{halanay1}\\
v(s) & = \psi(s), s\in  [-\tau,0],\label{halanay2}
\end{align}
where $0<a<\mu$, $\psi\in C([-\tau,0];\mathbb R^+)$ and $b\in L^1_{loc}(\mathbb R^+)$ which is nondecreasing. Then
\begin{align}
v(t) & \le \frac{\mu}{\mu-a} \Big[v_0 +  \int_0^t \omega(t-s,\mu)b(s)ds\Big] + \sup_{s\in [-\tau,0]}\psi(s),\;\forall t>0.\label{halanay3}
\end{align}
In addition, if $\omega(\cdot,\mu)*b$ is bounded on $\mathbb R^+$ then 
\begin{align}
\limsup_{t\to \infty} v(t) &\le \sup_{t\in \mathbb R^+}\int_0^t \omega(t-s,\mu)b(s)ds.\label{halanay4}
\end{align}
In particular, if $b=0$ then $v(t)\to 0$ as $t\to \infty$.
\end{lemma}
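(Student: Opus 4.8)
The plan is to combine the monotone ``running supremum'' of $v$ with the averaging estimate in Proposition \ref{pp-relax-func}(4), the only place where the assumption $a<\mu$ is really used. Set $U(t)=\sup_{\zeta\in[-\tau,t]}v(\zeta)$, which is continuous and nondecreasing, and observe that, because $-\tau\le s-\rho(s)\le s$, the interval $[s-\rho(s),s]$ is contained in $[-\tau,s]$, so $\sup_{\zeta\in[s-\rho(s),s]}v(\zeta)\le U(s)$. Since $U$ is nondecreasing and $\omega(\cdot,\mu)\ge 0$, for every $s\le t$ one has $U(s)\le U(t)$, and Proposition \ref{pp-relax-func}(4) gives $\int_0^t\omega(t-s,\mu)\,ds=\int_0^t\omega(r,\mu)\,dr\le \mu^{-1}(1-\omega(t,\mu))\le\mu^{-1}$. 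I will also record that $v_0=v(0)=\psi(0)\ge 0$ and that, since $b$ is nonnegative and nondecreasing, $\omega(\cdot,\mu)*b$ is nondecreasing.

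For \eqref{halanay3} I would insert these bounds into \eqref{halanay1}. Using $\omega(t,\mu)\le 1$ from Proposition \ref{pp-relax-func}(1) and the two estimates above, every $t'\le t$ satisfies
\[
v(t')\le v_0+a\,U(t')\!\int_0^{t'}\!\omega(t'-s,\mu)\,ds+(\omega(\cdot,\mu)*b)(t')\le v_0+\tfrac{a}{\mu}U(t)+(\omega(\cdot,\mu)*b)(t).
\]
Taking the supremum over $t'\in[0,t]$ and recalling that $v\le\sup_{[-\tau,0]}\psi=:V$ on $[-\tau,0]$ gives $U(t)\le\max\{V,\ v_0+\frac{a}{\mu}U(t)+(\omega(\cdot,\mu)*b)(t)\}$. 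If the maximum is $V$ we are done; otherwise $U(t)-\frac{a}{\mu}U(t)\le v_0+(\omega(\cdot,\mu)*b)(t)$, and dividing by $1-\frac a\mu>0$ (here $a<\mu$ enters) yields $U(t)\le\frac{\mu}{\mu-a}\big[v_0+\int_0^t\omega(t-s,\mu)b(s)\,ds\big]$. In either case $v(t)\le U(t)$ is bounded by the right-hand side of \eqref{halanay3}.

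For the asymptotic statement I would use that \eqref{halanay3} already makes $v$ bounded, so $\ell:=\limsup_{t\to\infty}v(t)$ is finite. The mechanism is to let the delay forget the initial data: since $t-\rho(t)\to\infty$, given $\varepsilon>0$ there is $T$ with $\sup_{[s-\rho(s),s]}v\le\ell+\varepsilon$ for all $s\ge T$. Splitting the integral in \eqref{halanay1} at $T$, the term $\omega(t,\mu)v_0$ and the part over $[0,T]$ both tend to $0$ as $t\to\infty$ by the decay $\mu\omega(t,\mu)\le\min\{t^{-1},t^{\alpha-1}\}$ of Proposition \ref{pp-relax-func}(3) together with the monotonicity $\omega(t-s,\mu)\le\omega(t-T,\mu)$ on $[0,T]$; the tail is at most $\frac a\mu(\ell+\varepsilon)+\int_0^t\omega(t-s,\mu)b(s)\,ds$ by Proposition \ref{pp-relax-func}(4). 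Passing to the limit superior and letting $\varepsilon\to0^+$ produces $\ell\le\frac a\mu\ell+\sup_{t\in\mathbb R^+}\int_0^t\omega(t-s,\mu)b(s)\,ds$; since $a<\mu$, absorbing the $\ell$-term on the left gives \eqref{halanay4}, and when $b\equiv0$ this forces $\ell\le0$, i.e. $v(t)\to0$. The main obstacle is precisely this decoupling step: the supremum $\sup_{[s-\rho(s),s]}v$ couples the present value of $v$ to an a priori uncontrolled stretch of its past, and it is the hypothesis $t-\rho(t)\to\infty$, not any smallness of the delay, that lets one replace that supremum by $\ell+\varepsilon$ and close the estimate.
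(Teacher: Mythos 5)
Your argument follows the same two-step route as the paper's proof: first reduce \eqref{halanay1}, via Proposition \ref{pp-relax-func}(4) and $\omega(t,\mu)\le 1$, to the sup-inequality $v(t)\le v_0+(\omega(\cdot,\mu)*b)(t)+\frac{a}{\mu}\sup_{\zeta\in[-\tau,t]}v(\zeta)$, and then obtain the asymptotic bound by splitting the integral at a time beyond which the delayed supremum is at most $\limsup v+\varepsilon$, using the boundedness of $v$ coming from \eqref{halanay3}, the decay of $\omega$, and $\int_0^t\omega(s,\mu)ds\le\mu^{-1}$. The one genuine difference is in the first step: the paper passes from the sup-inequality to \eqref{halanay3} by citing a Halanay-type lemma from \cite{Wang15}, whereas you prove that implication directly through the case analysis on $U(t)=\sup_{\zeta\in[-\tau,t]}v(\zeta)$, absorbing $\frac{a}{\mu}U(t)$ when the maximum is not realized on $[-\tau,0]$; this makes the proof self-contained at essentially no extra cost. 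Your treatment of the $[0,T]$ piece (bounding $\omega(t-s,\mu)\le\omega(t-T,\mu)$ and letting $\omega(t-T,\mu)\to 0$) is a harmless variant of the paper's, which instead uses $\int_{t-T_1}^{t}\omega(r,\mu)dr\to 0$.

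One caveat, which you share with the paper rather than introduce: the absorption at the end yields $\ell\le\frac{a}{\mu}\ell+\sup_{t\ge 0}(\omega(\cdot,\mu)*b)(t)$, hence $\ell\le\frac{\mu}{\mu-a}\sup_{t\ge 0}(\omega(\cdot,\mu)*b)(t)$, which is \eqref{halanay4} \emph{with an extra factor} $\frac{\mu}{\mu-a}$; so your sentence ``absorbing the $\ell$-term on the left gives \eqref{halanay4}'' is an overclaim. The paper's own proof ends with exactly the same inequality and silently drops the factor, so this is a defect of the stated lemma rather than of your reasoning relative to it. In fact the factor cannot be removed: for $\rho\equiv 0$ and $b\equiv b_0>0$, a solution of the corresponding equality in \eqref{halanay1} that converges must converge to $L$ with $L=(aL+b_0)/\mu$ (using $\int_0^\infty\omega(s,\mu)ds=\mu^{-1}$), i.e. $L=b_0/(\mu-a)$, which exceeds $\sup_{t\ge0}(\omega(\cdot,\mu)*b)(t)\le b_0/\mu$. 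For the case $b=0$, the only one needed for the asymptotic stability results, the factor is immaterial and your conclusion $v(t)\to 0$ is complete.
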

\begin{proof}
We make use of the following result \cite{Wang15}: if $v\in C([-\tau, \infty);\mathbb R^+)$ is a nonnegative function satisfying
\begin{align*}
v(t) & \le d(t) + c\sup_{\zeta\in [-\tau,t]}v(\zeta), \; t>0,\\
v(s) & = \psi(s), \; s\in [-\tau,0],
\end{align*}
where $d(\cdot)$ is a nondecreasing function and $c\in (0,1)$, then
\begin{align}
v(t) \le (1-c)^{-1}d(t) + \sup_{s\in [-\tau,0]}\psi(s),\; \forall t>0.\label{halanay5}
\end{align}
It follows from \eqref{halanay1} that
\begin{align*}
v(t) & \le v_0 + \omega(\cdot,\mu)*b(t) + a \sup_{\zeta\in [-h,t]}v(\zeta) \int_0^t \omega(t-s,\mu)ds\\
& \le v_0 + \omega(\cdot,\mu)*b(t) +\frac{a}{\mu} \sup_{\zeta\in [-h,t]}v(\zeta) (1-\omega(t,\mu))\\
& \le v_0 + \omega(\cdot,\mu)*b(t) +\frac{a}{\mu} \sup_{\zeta\in [-h,t]}v(\zeta),
\end{align*}
here we utilized Proposition \ref{pp-relax-func}(4). Since $b(\cdot)$ is nondecreasing, it is easily seen that the function $\omega(\cdot,\mu)*b$ is nondecreasing as well. Applying inequality \eqref{halanay5} for $d(\cdot)=\omega(\cdot,\mu)*b$ and $c=a/\mu$, we get \eqref{halanay3} as desired.

Now assume that $\omega(\cdot,\mu)*b$ is bounded on $\mathbb R^+$. Then by \eqref{halanay3}, $v(\cdot)$ is bounded by
$$
M:= \frac{\mu}{\mu-a} \Big[v_0 +  \sup_{t\in\mathbb R^+}\int_0^t \omega(t-s,\mu)b(s)ds\Big] + \sup_{s\in [-\tau,0]}\psi(s),
$$
and therefore the limit $L=\lim\limits_{t\to \infty}\sup_{\zeta\in [t,\infty)}v(\zeta)$ exists. Since $t-\rho(t)\to \infty$ as $t\to \infty$, for any $\varepsilon>0$, one can find $T_1>0$ such that 
$$
\sup_{\zeta\in [t-\rho(t), t]}v(\zeta)\le \sup_{\zeta\in [t-\rho(t), \infty]}v(\zeta)\le L+\varepsilon, \; \forall t\ge T_1.
$$
Owing to the last estimate, we see that
\begin{align}
v(t) & \le \omega(t,\mu)v_0 + \omega(\cdot,\mu)*b(t)\notag\\
& \quad + \left(\int_0^{T_1}+\int_{T_1}^t\right)\omega(t-s,\mu)a\sup_{\zeta\in [s-\rho(s),s]}v(\zeta)ds\notag \\
& \le \omega(t,\mu)v_0 + \omega(\cdot,\mu)*b(t)\notag\\
& \quad + aM\int_0^{T_1}\omega(t-s,\mu) ds+a(L+\varepsilon)\int_{T_1}^t\omega(t-s,\mu)ds\notag \\
& \le \varepsilon v_0 + \omega(\cdot,\mu)*b(t)\notag\\
& \quad + aM\int_{t-T_1}^t \omega(t-s,\mu) ds+a(L+\varepsilon)\int_0^t\omega(t-s,\mu)ds\notag\\
& \le \varepsilon v_0 + \omega(\cdot,\mu)*b(t) + aM\varepsilon + a(L+\varepsilon)\mu^{-1},\label{halanay6}
\end{align}
provided $t$ chosen such that
\begin{align*}
\omega(t,\mu) \le \varepsilon,\; \int_{t-T_1}^t \omega(t-s,\mu) ds \le \varepsilon,
\end{align*}
which is possible since $\omega(t,\mu)\to 0$ as $t\to\infty$ and $\omega(\cdot,\mu)\in L^1(\mathbb R^+)$.

It follows from \eqref{halanay6} that
\begin{align*}
L=\lim_{t\to\infty}\sup_{\zeta\in [t,\infty]}v(\zeta) \le aL\mu^{-1} + \sup_{t\in\mathbb R^+}\omega(\cdot,\mu)*b(t)+(v_0+aM+a\mu^{-1})\varepsilon,
\end{align*}
which implies that
\begin{align*}
L\le \frac{\mu}{\mu-a}\sup_{t\in\mathbb R^+}\omega(\cdot,\mu)*b(t) + \frac{\mu}{\mu-a}(v_0+aM+a\mu^{-1})\varepsilon.
\end{align*}
Hence
\begin{align*}
\limsup_{t\to\infty}v(t) \le L \le \frac{\mu}{\mu-a}\sup_{t\in\mathbb R^+}\omega(\cdot,\mu)*b(t),
\end{align*}
thanks to the fact that $\varepsilon$ is an arbitrarily positive number.
\end{proof}
\section{Solvability and stability}
Based on representation \eqref{eq-sol}, we give the following definition.
\begin{definition}
Let $\xi\in C([-\tau,0];L^2(\Omega))$ be given. A function $u\in C([-\tau,T];L^2(\Omega))$ is said to be a mild solution to \eqref{e1}-\eqref{e3} on the interval $[-\tau,T]$ iff $u(\cdot,s)=\xi(\cdot,s)$ for $s\in [-\tau,0]$ and 
\begin{align*}
u(\cdot,t) = S(t)\xi(\cdot,0) + \int_0^t S(t-s)f(s, u_\rho(\cdot,s))ds, \; t\in [0,T].
\end{align*}
\end{definition}
For given $\xi\in C([-\tau,0];L^2(\Omega))$, denote $C_\xi([0,T];L^2(\Omega)):=\{u\in C([0,T];L^2(\Omega)): u(\cdot,0)=\xi(\cdot,0)\}$. For $u\in C_\xi([0,T];L^2(\Omega))$, we define $u[\xi] \in C([-\tau, T];L^2(\Omega))$ as follows
\begin{equation*}
u[\xi](\cdot,t)=\begin{cases}
u(\cdot, t)& \text{ if } t\in [0,T],\\
\xi(\cdot, t) & \text{ if } t\in [-\tau, 0].
\end{cases}
\end{equation*}
Hence, we have
$$
u[\xi]_\rho(\cdot, t)=\begin{cases}
u(\cdot, t-\rho(t))& \text{ if } t-\rho(t)\in [0,T],\\
\xi(\cdot, t-\rho(t)) & \text{ if } t-\rho(t)\in [-\tau, 0].
\end{cases}
$$
In what follows, we use the notation $\|\cdot\|_\infty$ for the sup norm in the spaces $C([-\tau,0];L^2(\Omega))$, $C([-\tau,T];L^2(\Omega))$ and $C([0,T];L^2(\Omega))$.

Let $\Phi: C_\xi([0,T];L^2(\Omega))\to C_\xi([0,T];L^2(\Omega))$ be the operator defined by
\begin{align*}
\Phi(u)(\cdot, t) = S(t)\xi(\cdot,0) + \int_0^t S(t-s)f(s, u[\xi]_\rho(\cdot,s))ds,
\end{align*}
which will be referred to as \textit{the solution operator}. This operator is continuous if $f$ is a continuous map. Obviously, $u$ is a fixed point of $\Phi$ iff $u[\xi]$ is a mild solution of \eqref{e1}-\eqref{e3}.

In the next theorems, we show some global existence results for \eqref{e1}-\eqref{e3}.
\begin{theorem}\label{th-exist-a}
Let $f: [0,T]\times L^2(\Omega)\to L^2(\Omega)$ be a continuous mapping such that
\begin{itemize}
\item[(\textbf{F}1)] $\|f(t,v)\| \le p(t)G(\|v\|)$ for all $t\in [0,T]$ and $v\in L^2(\Omega)$,
where $p\in L^1(0,T)$ is a nonnegative function and $G$ is a continuous and nonnegative function obeying that
\begin{align*}
\limsup_{r\to 0}\frac{G(r)}r \cdot \sup_{t\in [0,T]}\int_0^t\omega(t-s,\lambda_1)p(s)ds <1.
\end{align*}
\end{itemize}
Then there exists $\delta>0$ such that the problem \eqref{e1}-\eqref{e3} has at least one mild solution on $[-\tau,T]$, provided $\|\xi\|_\infty\le \delta$.
\end{theorem}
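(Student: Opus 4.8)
The plan is to reformulate the problem as a fixed point equation for the solution operator $\Phi$ and to invoke Schauder's fixed point theorem on a small closed ball, the compactness being supplied by the Cauchy operator $\mathcal Q$.

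First I would convert the growth condition (\textbf{F}1) into a pointwise bound valid near the origin. Set $\kappa:=\sup_{t\in[0,T]}\int_0^t\omega(t-s,\lambda_1)p(s)\,ds$, which is finite because $p\in L^1(0,T)$ and $\omega(\cdot,\lambda_1)\le 1$; we may assume $\kappa>0$, otherwise $f\equiv 0$ and the claim is trivial. Hypothesis (\textbf{F}1) states $\kappa\cdot\limsup_{r\to 0}G(r)/r<1$, so there exist $\ell\in(0,1)$ and $r_0>0$ with $G(r)\le(\ell/\kappa)\,r$ for all $0<r\le r_0$ (note $G(0)=0$ follows by continuity). I then fix $R:=r_0$ and put $\delta:=(1-\ell)R$, so that in particular $\delta\le R$.

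Next comes the self-mapping estimate. Consider $\mathcal B_R:=\{u\in C_\xi([0,T];L^2(\Omega)):\|u\|_\infty\le R\}$, a nonempty (it contains the constant profile $t\mapsto\xi(\cdot,0)$, since $\|\xi(\cdot,0)\|\le\delta\le R$), closed, convex and bounded subset of $C([0,T];L^2(\Omega))$. For $u\in\mathcal B_R$ the piecewise definition of $u[\xi]_\rho$ gives $\|u[\xi]_\rho(\cdot,s)\|\le\max\{\|u\|_\infty,\|\xi\|_\infty\}\le R$ for every $s\in[0,T]$. Using Lemma~\ref{lm-op-sol}(2), (\textbf{F}1) and the bound on $G$, I would estimate
$$
\|\Phi(u)(\cdot,t)\|\le\omega(t,\lambda_1)\|\xi(\cdot,0)\|+\int_0^t\omega(t-s,\lambda_1)p(s)G\big(\|u[\xi]_\rho(\cdot,s)\|\big)\,ds\le\delta+\frac{\ell}{\kappa}R\,\kappa=\delta+\ell R=R,
$$
so that $\Phi(\mathcal B_R)\subset\mathcal B_R$.

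Finally I would establish that $\Phi$ is continuous and compact on $\mathcal B_R$. Continuity is already noted to follow from the continuity of $f$. For compactness, write $\Phi(u)(\cdot,t)=S(t)\xi(\cdot,0)+\mathcal Q(Nu)(t)$ with $Nu(s):=f(s,u[\xi]_\rho(\cdot,s))$: the first summand is a single fixed element of $C([0,T];L^2(\Omega))$, while $N$ carries $\mathcal B_R$ into a bounded subset of $C([0,T];L^2(\Omega))$ by (\textbf{F}1), so that $\mathcal Q(N\mathcal B_R)$ is relatively compact by the compactness of the Cauchy operator (Lemma~\ref{lm-cauchy-op}). Hence $\Phi$ is a compact continuous self-map of the closed convex bounded set $\mathcal B_R$, and Schauder's theorem furnishes a fixed point $u^*\in\mathcal B_R$; then $u^*[\xi]$ is a mild solution of \eqref{e1}--\eqref{e3} on $[-\tau,T]$. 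The main obstacle I anticipate is the bookkeeping around the delayed superposition $s\mapsto u[\xi]_\rho(\cdot,s)$, namely confirming it is $L^2(\Omega)$-continuous in $s$ (which follows from continuity of $u[\xi]$ and of $s\mapsto s-\rho(s)$) so that $N$ genuinely maps into $C([0,T];L^2(\Omega))$ and sends bounded sets to bounded sets; the conceptual crux, the compactness of $\mathcal Q$, is already available to us.
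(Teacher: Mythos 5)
Your proposal is correct and follows essentially the same route as the paper: the smallness condition (\textbf{F}1) is used to make the solution operator $\Phi$ a self-map of a small closed ball, and the fixed point is then produced by Schauder's theorem via the decomposition $\Phi = S(\cdot)\xi(\cdot,0)+\mathcal Q\circ N_f$ and the compactness of the Cauchy operator (Lemma~\ref{lm-cauchy-op}). The only differences are cosmetic bookkeeping: your explicit choice $\delta=(1-\ell)R$ and the sharper bound $\|u[\xi]_\rho(\cdot,s)\|\le\max\{\|u\|_\infty,\|\xi\|_\infty\}$ replace the paper's infimum-based constant $\delta_0$ and its cruder sum bound $\|u\|_\infty+\|\xi\|_\infty\le 2\eta$.
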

\begin{proof}
Let 
\begin{align*}
\ell = \limsup_{r\to 0}\frac{G(r)}r,\; M= \sup_{t\in [0,T]}\omega(\cdot,\lambda_1)*p(t).
\end{align*}
Then by assumption, one can take $\epsilon>0$ such that $(\ell + \epsilon)M<1$. In addition, there is $\eta>0$ such that
\begin{align*}
\frac{G(r)}r \le \ell + \epsilon,\;\forall r\in [0,2\eta].
\end{align*}
Let 
\begin{align*}
\delta_0 = \eta \inf_{t\in [0,T]}\big\{\big[\omega(t,\lambda_1)+(\ell+\epsilon)\omega(\cdot,\lambda_1)*p(t)\big]^{-1}\big[1-(\ell+\epsilon)\omega(\cdot,\lambda_1)*p(t)\big]\big\},
\end{align*}
then $\delta_0>0$. Indeed, observing that 
$$
\big[\omega(t,\lambda_1)+(\ell+\epsilon)\omega(\cdot,\lambda_1)*p(t)\big]^{-1}\ge \left[1+(\ell+\epsilon)M\right]^{-1},
$$ 
we get
\begin{align*}
\delta_0 & \ge \eta \left[1+(\ell+\epsilon)M\right]^{-1}\inf_{t\in [0,T]}\big[1-(\ell+\epsilon)\omega(\cdot,\lambda_1)*p(t)\big]\\
& \ge \eta\left[1+(\ell+\epsilon)M\right]^{-1}\big[1-(\ell+\epsilon)\sup_{t\in [0,T]}\omega(\cdot,\lambda_1)*p(t)\big]\\
& = \eta \left[1+(\ell+\epsilon)M\right]^{-1}\big[1-(\ell+\epsilon)M\big]>0.
\end{align*}
Denote by $\mathsf{B}_\eta$ the closed ball in $C_\xi([0,T];L^2(\Omega))$ centered at origin with radius $\eta$. Considering $\Phi:\mathsf{B}_\eta \to C_\xi([0,T];L^2(\Omega))$, we have
\begin{align*}
\|\Phi(u)(\cdot,t)\| & \le \omega(t,\lambda_1)\|\xi(\cdot,0)\| + \int_0^t \omega(t-s,\lambda_1)p(s)G(\|u[\xi]_\rho(\cdot,s)\|)ds,
\end{align*} 
thanks to Lemma \ref{lm-op-sol}(2). Put $\delta=\min\{\delta_0, \eta\}$. If $\xi\in C([-\tau,0];L^2(\Omega))$ such that $\|\xi\|_\infty\le \delta$, then 
$$
\|u[\xi]_\rho(\cdot,s)\|\le \|u\|_\infty+\|\xi\|_\infty \le \eta+\delta\le 2\eta \text{ for all } s\in [0,T].$$ 
So 
\begin{align*}
\|\Phi(u)(\cdot,t)\| & \le \omega(t,\lambda_1)\|\xi\|_\infty +  (\ell+\epsilon)\int_0^t \omega(t-s,\lambda_1)p(s)\|u[\xi]_\rho(\cdot,s)\|ds\\
& \le \omega(t,\lambda_1)\delta + (\eta+\delta) (\ell+\epsilon)\int_0^t \omega(t-s,\lambda_1)p(s)ds\\
& \le \big[\omega(t,\lambda_1)+(\ell+\epsilon)\omega(\cdot,\lambda_1)*p(t)\big]\delta_0+\eta(\ell+\epsilon)\omega(\cdot,\lambda_1)*p(t)\\
& \le \eta,\;\forall t\in [0,T].
\end{align*}
We have shown that $\Phi(\mathsf{B}_\eta)\subset \mathsf{B}_\eta$, provided $\|\xi\|_\infty \le \delta$. Consider $\Phi: \mathsf{B}_\eta\to \mathsf{B}_\eta$. In order to apply the Schauder fixed point theorem, it remains to check that $\Phi$ is a compact operator. It should be noted that, $\Phi$ admits the representation
$$
\Phi(u) = S(\cdot)\xi + \mathcal Q\circ N_f(u),
$$
where $N_f(u)(\cdot,t) = f(t,u[\xi]_\rho(\cdot,t))$. According to the compactness of $\mathcal Q$ stated in Lemma \ref{lm-cauchy-op}, we conclude that $\Phi$ is compact. The proof is complete.
\end{proof}
Theorem \ref{th-exist-a} deals with the case that $f$ is possibly superlinear. In the next theorem, we can relax the smallness condition on initial data, provided that $f$ has a sublinear growth.
\begin{theorem}\label{th-exist-b}
Let $f: [0,T]\times L^2(\Omega)\to L^2(\Omega)$ be a continuous mapping such that
\begin{itemize}
\item[(\textbf{F}2)] $\|f(t,v)\| \le p(t)(1+\|v\|)$ for all $t\in [0,T]$ and $v\in L^2(\Omega)$,
where $p\in L^1(0,T)$ is a nonnegative function. 
\end{itemize}
Then the problem \eqref{e1}-\eqref{e3} has at least one mild solution on $[-\tau,T]$.
\end{theorem}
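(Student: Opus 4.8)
The plan is to realize a mild solution as a fixed point of the solution operator $\Phi$, combining a \emph{global} a priori estimate with the same compactness/Schauder machinery used for Theorem \ref{th-exist-a}. The essential new point, compared with Theorem \ref{th-exist-a}, is that linear growth lets one bound every prospective solution on all of $[0,T]$ without any smallness of $\xi$; the price is that one can no longer expect $\Phi$ to map a fixed ball into itself once $\|p\|_{L^1(0,T)}$ is not small. This self-mapping failure is, to my mind, the main obstacle, and I would circumvent it by truncation.

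First I would derive the a priori bound. Suppose $u\in C_\xi([0,T];L^2(\Omega))$ satisfies $u=\Phi(u)$. Writing $V(t)=\|u[\xi](\cdot,t)\|$ on $[-\tau,T]$, so that $V(t)=\|\xi(\cdot,t)\|\le\|\xi\|_\infty$ on $[-\tau,0]$, I would use Lemma \ref{lm-op-sol}(2), the bound $\omega(t,\lambda_1)\le 1$ from Proposition \ref{pp-relax-func}(1), and $\|u[\xi]_\rho(\cdot,s)\|=V(s-\rho(s))\le\sup_{-\tau\le\zeta\le s}V(\zeta)=:W(s)$ to obtain
\begin{align*}
V(t)\le \|\xi\|_\infty+\|p\|_{L^1(0,T)}+\int_0^t p(s)W(s)\,ds,\quad t\in[0,T].
\end{align*}
Since $W$ is nondecreasing and the right-hand side is nondecreasing in $t$ and dominates $\|\xi\|_\infty$, this upgrades to the same inequality for $W$ in place of $V$, whence Gronwall's inequality yields the uniform bound
\begin{align*}
\|u[\xi]\|_\infty\le R:=\left(\|\xi\|_\infty+\|p\|_{L^1(0,T)}\right)\exp\left(\|p\|_{L^1(0,T)}\right),
\end{align*}
valid for every fixed point independently of $u$.

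Next I would remove the self-map obstruction. Fix $R''>R$, let $P_{R''}:L^2(\Omega)\to L^2(\Omega)$ be the radial retraction onto the closed ball $\{\|v\|\le R''\}$, and set $\tilde f(t,v)=f(t,P_{R''}v)$, so that $\|\tilde f(t,v)\|\le p(t)(1+R'')$ for \emph{all} $v$ while $\tilde f(t,v)=f(t,v)$ whenever $\|v\|\le R''$. Let $\tilde\Phi$ be the associated solution operator. By Lemma \ref{lm-op-sol}(2) and $\omega\le 1$, the operator $\tilde\Phi$ maps all of $C_\xi([0,T];L^2(\Omega))$ into the fixed ball $\mathsf{B}_{R_0}$ with $R_0=\|\xi\|_\infty+(1+R'')\|p\|_{L^1(0,T)}$, so $\tilde\Phi(\mathsf{B}_{R_0})\subset \mathsf{B}_{R_0}$ regardless of the size of $\|p\|_{L^1(0,T)}$. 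Writing $\tilde\Phi=S(\cdot)\xi(\cdot,0)+\mathcal Q\circ N_{\tilde f}$ with $N_{\tilde f}(u)(\cdot,t)=\tilde f(t,u[\xi]_\rho(\cdot,t))$ and invoking the compactness of $\mathcal Q$ from Lemma \ref{lm-cauchy-op} exactly as in Theorem \ref{th-exist-a}, $\tilde\Phi$ is compact, and the Schauder fixed point theorem produces $u^*\in\mathsf{B}_{R_0}$ with $u^*=\tilde\Phi(u^*)$.

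Finally I would close the loop. Because $\|\tilde f(t,v)\|\le p(t)(1+\|v\|)$, the a priori estimate applies verbatim to $u^*$, giving $\|u^*[\xi]\|_\infty\le R<R''$. Hence $\|u^*[\xi]_\rho(\cdot,s)\|\le R''$ for all $s\in[0,T]$, the truncation is inactive along $u^*$, so $\tilde f(s,u^*[\xi]_\rho(\cdot,s))=f(s,u^*[\xi]_\rho(\cdot,s))$ and therefore $u^*=\Phi(u^*)$; thus $u^*[\xi]$ is the desired mild solution on $[-\tau,T]$. The crux is the a priori bound together with the observation that, unlike in Theorem \ref{th-exist-a}, no ball is invariant under $\Phi$ itself for large $\|p\|_{L^1(0,T)}$, and passing to the bounded map $\tilde f$ is what reconciles global existence with the Schauder framework. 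Equivalently, one may keep $f$ and apply the Leray--Schauder alternative to the homotopy $u=\lambda\Phi(u)+(1-\lambda)S(\cdot)\xi(\cdot,0)$, whose fixed points satisfy the very same bound $R$.
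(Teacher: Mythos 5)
Your proof is correct, but it follows a genuinely different route from the paper's. The paper never modifies $f$: it constructs an invariant closed convex set directly, namely $D=\{u\in C_\xi([0,T];L^2(\Omega)):\ \sup_{\zeta\in[0,t]}\|u(\cdot,\zeta)\|\le\psi(t)\ \forall t\in[0,T]\}$, where $\psi$ solves the linear Volterra equation $\psi(t)=\|\xi\|_\infty+(1+\|\xi\|_\infty)\int_0^t p(s)\,ds+\int_0^t p(s)\psi(s)\,ds$, and then checks $\Phi(D)\subset D$ by exactly the computation you use for your a priori bound (using Lemma \ref{lm-op-sol}(2), $\omega\le 1$, and the fact that the integral bound is nondecreasing in $t$); Schauder, via the compactness of $\mathcal Q$ (Lemma \ref{lm-cauchy-op}), then finishes. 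In other words, the paper encodes the Gronwall estimate in the \emph{shape} of the invariant set --- a time-dependent radius $\psi(t)$ rather than a fixed ball --- which is precisely how it overcomes the self-mapping obstruction you correctly identify, without introducing the truncation $P_{R''}$, the auxiliary operator $\tilde\Phi$, or the final de-truncation step. Your scheme (a priori bound on fixed points $+$ truncation to a bounded nonlinearity $+$ Schauder on a large ball $+$ showing the truncation is inactive) is more modular and essentially the Leray--Schauder pattern you mention; it additionally yields the explicit uniform bound $R=(\|\xi\|_\infty+\|p\|_{L^1(0,T)})\exp(\|p\|_{L^1(0,T)})$ for \emph{all} mild solutions, which the paper's argument does not state. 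Both proofs rest on the same ingredients (resolvent estimate, compactness of $\mathcal Q$, Schauder), and your argument inherits, rather than introduces, the one implicit step the paper also takes for granted, namely that $\mathcal Q\circ N_f$ is compact when $N_f(u)$ is only dominated by the integrable envelope $p$; so there is no new gap.
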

\begin{proof}
Let $\psi\in C([0,T];\mathbb R)$ be the unique solution of the integral equation
\begin{align*}
\psi(t) = \|\xi\|_\infty + (1+\|\xi\|_\infty)\int_0^t p(s)ds + \int_0^t p(s)\psi(s)ds,
\end{align*}
and $D=\{u\in C_\xi([0,T];L^2(\Omega)): \sup_{\zeta\in [0,t]}\|u(\zeta)\|\le \psi(t),\; \forall t\in [0,T]\}$. Then $D$ is a closed and convex subset of $C_\xi([0,T];L^2(\Omega))$. Since $\Phi$ is continuous and compact, it suffices to show that $\Phi(D)\subset D$. Let $u\in D$, then
\begin{align*}
\|\Phi(u)(\cdot,t)\| & \le \omega(t,\lambda_1)\|\xi\|_\infty + \int_0^t \omega(t-s,\lambda_1)p(s)(1+\|u[\xi]_\rho(\cdot,s)\|)ds\\
& \le \|\xi\|_\infty + \int_0^t p(s)(1+ \|\xi\|_\infty+\sup_{\zeta\in [0,s]}\|u(\zeta)\| )ds.
\end{align*}
Since the last integral is nondecreasing in $t$, we get
\begin{align*}
\sup_{\zeta\in [0,t]}\|\Phi(u)(\cdot,\zeta)\| & \le \|\xi\|_\infty + \int_0^t p(s)(1+ \|\xi\|_\infty+\sup_{\zeta\in [0,s]}\|u(\zeta)\| )ds \\
&\le \|\xi\|_\infty + \int_0^t p(s)(1+ \|\xi\|_\infty+\psi(s))ds =  \psi(t),
\end{align*}
which ensures that $\Phi(u)\in D$. The proof is complete.
\end{proof}
In the next theorem, we state an existence and uniqueness result.
\begin{theorem}\label{th-exist-c}
Let $f: [0,T]\times L^2(\Omega)\to L^2(\Omega)$ be a continuous mapping such that
\begin{itemize}
\item[(\textbf{F}3)] $f(\cdot,0)=0$ and $\|f(t,v_1)-f(t,v_2)\| \le p(t)\kappa(r)\|v_1-v_2\|$ for all $t\in [0,T]$ and $v_1, v_2\in L^2(\Omega)$ such that $\|v_1\|,\|v_2\|\le r$,
where $p\in L^1(0,T)$ is a nonnegative function and $\kappa$ is a continuous function obeying that
\begin{align*}
\limsup_{r\to 0}\kappa(r) \cdot \sup_{t\in [0,T]}\int_0^t\omega(t-s,\lambda_1)p(s)ds <1.
\end{align*}
\end{itemize}
Then there exists $\delta>0$ such that the problem \eqref{e1}-\eqref{e3} has a unique mild solution on $[-\tau,T]$, provided $\|\xi\|_\infty\le \delta$.
\end{theorem}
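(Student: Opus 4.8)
The plan is to recognize that (\textbf{F}3) together with $f(\cdot,0)=0$ forces a growth bound of type (\textbf{F}1), and then to sharpen the fixed point argument of Theorem~\ref{th-exist-a} from Schauder to Banach so as to extract uniqueness. First I would put $r=\|v\|$ in the Lipschitz estimate and use $f(t,0)=0$ to get $\|f(t,v)\|=\|f(t,v)-f(t,0)\|\le p(t)\kappa(\|v\|)\|v\|$, so that (\textbf{F}1) holds with $G(r)=\kappa(r)r$ and $\limsup_{r\to0}G(r)/r=\limsup_{r\to0}\kappa(r)$. Hence the smallness hypothesis of Theorem~\ref{th-exist-a} is met, and existence of a mild solution in a small ball for small $\|\xi\|_\infty$ is already available; the substance of the present statement is uniqueness. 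I would fix $\ell=\limsup_{r\to0}\kappa(r)$ and $M=\sup_{t\in[0,T]}\omega(\cdot,\lambda_1)*p(t)$, choose $\epsilon>0$ with $(\ell+\epsilon)M<1$, then $\eta>0$ with $\kappa(r)\le\ell+\epsilon$ for $r\in[0,2\eta]$, and finally $\delta=\min\{\delta_0,\eta\}$ exactly as in the proof of Theorem~\ref{th-exist-a}, so that $\Phi(\mathsf{B}_\eta)\subset\mathsf{B}_\eta$ whenever $\|\xi\|_\infty\le\delta$.

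Next I would show that $\Phi$ is a contraction on $\mathsf{B}_\eta$. For $u_1,u_2\in\mathsf{B}_\eta$ the common term $S(t)\xi(\cdot,0)$ cancels, and Lemma~\ref{lm-op-sol}(2) gives
\[
\|\Phi(u_1)(\cdot,t)-\Phi(u_2)(\cdot,t)\|\le\int_0^t\omega(t-s,\lambda_1)\|f(s,u_1[\xi]_\rho(\cdot,s))-f(s,u_2[\xi]_\rho(\cdot,s))\|\,ds.
\]
Since $\|u_i[\xi]_\rho(\cdot,s)\|\le\|u_i\|_\infty+\|\xi\|_\infty\le2\eta$, the constant $\kappa(2\eta)\le\ell+\epsilon$ applies uniformly. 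The key point is the delayed difference: when $s-\rho(s)\in[-\tau,0]$ both arguments reduce to $\xi(\cdot,s-\rho(s))$ and the difference vanishes, while when $s-\rho(s)\in[0,T]$ it equals $u_1(\cdot,s-\rho(s))-u_2(\cdot,s-\rho(s))$; in either case $\|u_1[\xi]_\rho(\cdot,s)-u_2[\xi]_\rho(\cdot,s)\|\le\|u_1-u_2\|_\infty$. Substituting and using $\int_0^t\omega(t-s,\lambda_1)p(s)\,ds\le M$ yields $\|\Phi(u_1)-\Phi(u_2)\|_\infty\le(\ell+\epsilon)M\|u_1-u_2\|_\infty$, a contraction. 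The Banach fixed point theorem then provides a unique fixed point of $\Phi$ in $\mathsf{B}_\eta$, hence a unique mild solution lying in $\mathsf{B}_\eta$.

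Finally I would upgrade this to uniqueness among all mild solutions on $[-\tau,T]$, not merely those in $\mathsf{B}_\eta$, by a Gronwall argument that does not even require smallness. Given two mild solutions $u,\tilde u$ sharing the datum $\xi$, both are continuous on a compact interval, hence bounded by some $R$, so the delayed arguments satisfy $\|u[\xi]_\rho(\cdot,s)\|,\|\tilde u[\xi]_\rho(\cdot,s)\|\le R$. Setting $w(t)=\sup_{\zeta\in[-\tau,t]}\|u(\cdot,\zeta)-\tilde u(\cdot,\zeta)\|$, which vanishes at $t=0$ since $u=\tilde u=\xi$ on $[-\tau,0]$, the same estimate with $\kappa(R)$ and $\omega\le1$ gives $w(t)\le\kappa(R)\int_0^t p(s)w(s)\,ds$, and Gronwall's inequality forces $w\equiv0$, i.e. $u=\tilde u$. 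I expect the main obstacle to be the careful treatment of the delay in the Lipschitz estimate, specifically verifying that $u[\xi]_\rho$ stays within the radius $2\eta$ on which a single Lipschitz constant is valid, and that its contribution is controlled by $\|u_1-u_2\|_\infty$ through the case split on whether $s-\rho(s)$ lies in $[-\tau,0]$ or $[0,T]$; the remaining estimates are routine adaptations of the proof of Theorem~\ref{th-exist-a}.
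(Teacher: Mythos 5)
Your proposal is correct and follows essentially the same route as the paper: existence by invoking Theorem \ref{th-exist-a} with $G(r)=\kappa(r)r$, and uniqueness among \emph{all} mild solutions by bounding both solutions by some $R$, using the Lipschitz constant $\kappa(R)$ together with $\omega\le 1$, and applying Gronwall's inequality to $\sup_{\zeta\in[0,t]}\|u_1(\cdot,\zeta)-u_2(\cdot,\zeta)\|$. Your intermediate Banach contraction step on $\mathsf{B}_\eta$ is sound but redundant, since the final Gronwall argument already yields uniqueness in the whole space, not just in the ball.
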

\begin{proof}
The existence result can be obtained by applying Theorem \ref{th-exist-a} with $G(r) =\kappa(r)r$. It remains to prove the uniqueness. Assume that $u_1[\xi], u_2[\xi]$ are solutions of \eqref{e1}-\eqref{e3}. Let $R=\max\{\|u_1[\xi]\|_\infty, \|u_2[\xi]\|_\infty\}$, then 
\begin{align*}
\|u_1(\cdot,t)-u_2(\cdot,t)\| & \le \int_0^t p(s)\kappa(R)\|u_1[\xi]_\rho(\cdot,s)-u_2[\xi]_\rho(\cdot,s)\|ds\\
& \le \int_0^t p(s)\kappa(R)\sup_{\zeta\in [0,s]}\|u_1(\cdot,\zeta)-u_2(\cdot,\zeta)\|ds,
\end{align*}
due to the fact that $u_1(\cdot,s)=u_2(\cdot,s)$ for $s\in [-\tau,0]$. Observing that, the last integral is nondecreasing in $t$, we have
\begin{align*}
\sup_{\zeta\in [0,t]}\|u_1(\cdot,t)-u_2(\cdot,t)\| & \le \int_0^t p(s)\kappa(R)\sup_{\zeta\in [0,s]}\|u_1(\cdot,\zeta)-u_2(\cdot,\zeta)\|ds, \; t\in [0,T],
\end{align*}
which implies that $u_1=u_2$, by means of the Gronwall inequality. The proof is complete.
\end{proof}
We are now in a position to show the dissipativity of our system.
\begin{theorem}\label{th-stab-a}
Let the hypotheses of Theorem \ref{th-exist-b} hold for all $T>0$ and $\|p\|_\infty =\text{\rm esssup}_{t\ge 0}p(t)<\lambda_1$. Then there exists a bounded absorbing set for solution of \eqref{e1}-\eqref{e3} with arbitrary initial data.
\end{theorem}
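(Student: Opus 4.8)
The plan is to reduce the estimate of the $L^2$-norm of an arbitrary mild solution to the scalar Halanay-type inequality of Lemma~\ref{lm-halanay}, with the first eigenvalue $\lambda_1$ playing the role of $\mu$ and $\|p\|_\infty$ the role of $a$. First I would note that, since the hypotheses of Theorem~\ref{th-exist-b} hold for every $T>0$, each initial datum $\xi$ produces a mild solution $u[\xi]$ on $[-\tau,\infty)$. Setting $v(t)=\|u[\xi](\cdot,t)\|$, so that $v(s)=\|\xi(\cdot,s)\|=:\psi(s)$ on $[-\tau,0]$, I take norms in the mild formulation and invoke Lemma~\ref{lm-op-sol}(2) together with (\textbf{F}2) to get
\[
v(t)\le \omega(t,\lambda_1)\|\xi(\cdot,0)\| + \int_0^t\omega(t-s,\lambda_1)p(s)\big(1+\|u[\xi]_\rho(\cdot,s)\|\big)\,ds.
\]
Because $\|u[\xi]_\rho(\cdot,s)\|=v(s-\rho(s))\le \sup_{\zeta\in[s-\rho(s),s]}v(\zeta)$ and $p(s)\le\|p\|_\infty$ almost everywhere, this is precisely inequality~\eqref{halanay1} with $v_0=\|\xi(\cdot,0)\|$, $\mu=\lambda_1$, $a=\|p\|_\infty$ and the constant forcing $b\equiv\|p\|_\infty$.

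Next I would check that the hypotheses of Lemma~\ref{lm-halanay} are met. The requirement $0<a<\mu$ is exactly the standing assumption $\|p\|_\infty<\lambda_1$; the constant $b\equiv\|p\|_\infty$ is nonnegative, nondecreasing and locally integrable; and by Proposition~\ref{pp-relax-func}(4) the convolution $\omega(\cdot,\lambda_1)*b(t)=\|p\|_\infty\int_0^t\omega(s,\lambda_1)\,ds\le \|p\|_\infty\lambda_1^{-1}$ is bounded on $\mathbb R^+$. Thus estimate~\eqref{halanay4} applies and yields
\[
\limsup_{t\to\infty}\|u[\xi](\cdot,t)\|\le \sup_{t\in\mathbb R^+}\int_0^t\omega(t-s,\lambda_1)\|p\|_\infty\,ds\le \frac{\|p\|_\infty}{\lambda_1}=:R_*<1 .
\]
The decisive feature is that $R_*$ is independent of $\xi$, so every closed ball $B_R=\{v\in L^2(\Omega):\|v\|\le R\}$ with $R>R_*$ is eventually entered by each individual solution.

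To upgrade this into a genuine bounded absorbing set, I would record that the transient bound~\eqref{halanay3} controls $v$ by a constant $M=M(\|\xi\|_\infty)$ that is uniform over any family of initial data with $\|\xi\|_\infty\le\rho_1$; inserting this uniform $M$, together with $L=\limsup_{t}v(t)\le R_*$ and the tail hypothesis $t-\rho(t)\to\infty$, into the chain~\eqref{halanay6} should produce an entry time $t_0=t_0(\rho_1,R)$ after which $\|u[\xi](\cdot,t)\|\le R$ for every admissible $\xi$. The main obstacle lies exactly here: Lemma~\ref{lm-halanay} as stated delivers only the asymptotic level $R_*$ for each solution separately, so the delicate part is to revisit the constants in its proof and verify that the moment of entry into $B_R$ can be chosen uniformly for bounded sets of initial data, rather than one solution at a time; establishing this uniform absorption time, and not the a priori bound itself, is where the real work sits.
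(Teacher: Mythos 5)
Your core argument coincides with the paper's own proof: take norms in the mild formulation, use Lemma \ref{lm-op-sol}(2) and (\textbf{F}2) to arrive at inequality \eqref{halanay1} with $\mu=\lambda_1$, $a=\|p\|_\infty$, $b\equiv\|p\|_\infty$, then invoke Lemma \ref{lm-halanay} to get $\limsup_{t\to\infty}\|u(\cdot,t)\|\le\|p\|_\infty\lambda_1^{-1}$, and take the ball of radius $\|p\|_\infty\lambda_1^{-1}+1$ as the absorbing set. The one point where you diverge is your final paragraph: the ``real work'' you identify --- a uniform entry time into $B(0,R)$ over bounded families of initial data --- is not carried out in the paper either. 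The paper's proof stops exactly at the per-solution $\limsup$ bound and on that basis declares the ball absorbing ``with arbitrary initial data,'' i.e., absorption is understood pointwise in $\xi$, one solution at a time. So what you have written already reproduces the paper's complete argument; the uniform-absorption refinement you defer is a genuine strengthening (closer to the standard dynamical-systems notion of absorbing set), and if you wanted it, the natural route is the one you sketch: the transient bound \eqref{halanay3} gives a constant $M$ depending only on $\|\xi\|_\infty\le\rho_1$, and rerunning the $\varepsilon$-argument leading to \eqref{halanay6} with that uniform $M$ yields an entry time depending only on $\rho_1$ and $R$. But by the paper's standard, your proof is already done before that paragraph.
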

\begin{proof}
Let $u$ be a solution of \eqref{e1}-\eqref{e3}. Then
\begin{align*}
\|u(\cdot,t)\| & \le \omega(t,\lambda_1)\|\xi\|_\infty+\int_0^t \omega(t-s,\lambda_1)p(s)(1+\|u_\rho(\cdot,s)\|)ds\\
& \le \omega(t,\lambda_1)\|\xi\|_\infty+\int_0^t \omega(t-s,\lambda_1)\|p\|_\infty(1+\sup_{\zeta\in [s-\rho(s),s]}\|u(\cdot,\zeta)\|)ds.
\end{align*}
Applying the Halanay type inequality formulated in Lemma \ref{lm-halanay}, we get
\begin{align*}
\limsup_{t\to \infty}\|u(\cdot,t)\| & \le \sup_{t\in\mathbb R^+}\int_0^t\omega(t-s,\lambda_1)\|p\|_\infty ds\\
& = \|p\|_\infty\lambda_1^{-1} \sup_{t\in\mathbb R^+}(1-\omega(t,\lambda_1))=\|p\|_\infty\lambda_1^{-1}.
\end{align*}
This implies that the ball $B(0,R)\subset L^2(\Omega)$ with $R=\|p\|_\infty\lambda_1^{-1}+1$ turns out to be an absorbing set for solution of \eqref{e1}-\eqref{e3} with arbitrary initial data.
\end{proof}
The next theorem shows the asymptotic stability of zero solution to \eqref{e1}.
\begin{theorem}\label{th-stab-b}
Let $f: \mathbb R^+\times L^2(\Omega)\to L^2(\Omega)$ be a continuous mapping such that
\begin{itemize}
\item[(\textbf{F}4)] $f(\cdot,0)=0$ and $\|f(t,v_1)-f(t,v_2)\| \le p(t)\kappa(r)\|v_1-v_2\|$ for all $t\in \mathbb R^+$ and $v_1, v_2\in L^2(\Omega)$ such that $\|v_1\|,\|v_2\|\le r$,
where $p\in L^\infty(\mathbb R^+)$ is a nonnegative function and $\kappa$ is a continuous function satisfying that
\begin{align*}
\|p\|_\infty \cdot \limsup_{r\to 0}\kappa(r) <\lambda_1.
\end{align*}
\end{itemize}
Then the zero solution of \eqref{e1} is asymptotically stable.
\end{theorem}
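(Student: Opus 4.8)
The plan is to reduce the stability analysis to the scalar Halanay inequality of Lemma~\ref{lm-halanay} applied to $v(t)=\|u(\cdot,t)\|$, with $\mu=\lambda_1$ and $b\equiv 0$. Since $f(\cdot,0)=0$, hypothesis (\textbf{F}4) yields the growth bound $\|f(t,w)\|=\|f(t,w)-f(t,0)\|\le p(t)\kappa(\|w\|)\|w\|$ whenever $\|w\|\le r$. Write $\kappa_0=\limsup_{r\to 0}\kappa(r)$, so that $\|p\|_\infty\kappa_0<\lambda_1$ by assumption. First I would fix a working radius: using that $\kappa$ is continuous and the $\limsup$ condition, choose $r^\ast>0$ small enough that
\begin{align*}
a:=\|p\|_\infty\,\sup_{0<r\le r^\ast}\kappa(r)<\lambda_1 .
\end{align*}
On any interval on which the solution stays in the ball $B(0,r^\ast)$, the mild solution formula together with Lemma~\ref{lm-op-sol}(2) then gives
\begin{align*}
\|u(\cdot,t)\|\le \omega(t,\lambda_1)\|\xi(\cdot,0)\|+\int_0^t\omega(t-s,\lambda_1)\,a\sup_{\zeta\in[s-\rho(s),s]}\|u(\cdot,\zeta)\|\,ds,
\end{align*}
which is exactly \eqref{halanay1} with $\mu=\lambda_1$, $b\equiv 0$ and admissible rate $0<a<\lambda_1$.

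Before invoking Halanay globally, I need a global solution and an a priori control keeping it inside $B(0,r^\ast)$. For global existence, note that for every finite $T$ the constant $\sup_{t\in[0,T]}\int_0^t\omega(t-s,\lambda_1)p(s)\,ds\le \|p\|_\infty\lambda_1^{-1}$ is uniform in $T$, so the smallness threshold furnished by Theorem~\ref{th-exist-c} (applied with $G(r)=\kappa(r)r$) can be taken independent of $T$; local existence and uniqueness hence extend to $[0,\infty)$ for sufficiently small data. The crux is then a continuation (bootstrap) argument. Given the target $r^\ast$, set
\begin{align*}
T^\ast=\sup\{T\ge 0:\ \|u(\cdot,t)\|\le r^\ast\ \text{for all}\ t\in[0,T]\}.
\end{align*}
For $\|\xi\|_\infty<r^\ast$ continuity gives $T^\ast>0$, and on $[0,T^\ast]$ the displayed estimate holds, so \eqref{halanay3} of Lemma~\ref{lm-halanay} (with $b\equiv 0$) yields
\begin{align*}
\|u(\cdot,t)\|\le \frac{\lambda_1}{\lambda_1-a}\|\xi(\cdot,0)\|+\sup_{s\in[-\tau,0]}\|\xi(\cdot,s)\|\le \Big(\tfrac{\lambda_1}{\lambda_1-a}+1\Big)\|\xi\|_\infty
\end{align*}
for all $t\in[0,T^\ast]$. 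Choosing $\delta\le r^\ast$ with $\big(\tfrac{\lambda_1}{\lambda_1-a}+1\big)\delta<r^\ast$, if $T^\ast$ were finite then $\|u(\cdot,T^\ast)\|=r^\ast$ by maximality, contradicting the strict bound just obtained; hence $T^\ast=\infty$, i.e. the solution remains in $B(0,r^\ast)$ for all $t$ whenever $\|\xi\|_\infty\le\delta$.

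With the solution trapped in $B(0,r^\ast)$, the Halanay estimate \eqref{halanay3} holds for all $t>0$, which already gives Lyapunov stability: for prescribed $\varepsilon>0$ I take $r^\ast\le\varepsilon$ in the above and obtain $\|u(\cdot,t)\|\le\varepsilon$ once $\|\xi\|_\infty\le\delta$. Attractivity follows from the final clause of Lemma~\ref{lm-halanay}: since \eqref{halanay1} holds globally with $b\equiv 0$, we conclude $\|u(\cdot,t)\|\to 0$ as $t\to\infty$. Together these two facts establish asymptotic stability of the zero solution. I expect the main obstacle to be the circular dependence between the radius $r^\ast$ fixing the Lipschitz rate $\kappa(r^\ast)$ and the a priori bound that must keep $u$ inside $B(0,r^\ast)$; this is precisely what the continuation argument resolves, and it is essential that $a<\lambda_1$ be secured \emph{uniformly} on $(0,r^\ast]$ (not merely in the limit $r\to 0$) so that Lemma~\ref{lm-halanay} is applicable throughout.
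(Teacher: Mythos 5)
Your proposal is correct, and it rests on the same two pillars as the paper's proof: global solvability for small data obtained from Theorem \ref{th-exist-c} with $G(r)=\kappa(r)r$ (the smallness threshold being uniform in $T$ because $\sup_{t\ge 0}\,\omega(\cdot,\lambda_1)*p(t)\le\|p\|_\infty\lambda_1^{-1}$), and the Halanay inequality of Lemma \ref{lm-halanay} applied to $v(t)=\|u(\cdot,t)\|$ with $\mu=\lambda_1$, $b\equiv 0$, and $a=\|p\|_\infty\sup_{0<r\le r^\ast}\kappa(r)<\lambda_1$. The one place you diverge is the confinement step. The paper needs no continuation argument: the fixed point constructed in the proofs of Theorems \ref{th-exist-a} and \ref{th-exist-c} is found inside the ball $\mathsf{B}_\eta$ (since $\Phi(\mathsf{B}_\eta)\subset\mathsf{B}_\eta$ once $\|\xi\|_\infty\le\delta$), and the uniqueness assertion guarantees that \emph{the} solution satisfies $\|u(\cdot,t)\|\le\eta$ for every $t$, so the reduced Lipschitz rate $\|p\|_\infty(\ell+\theta)$ is available on all of $[0,\infty)$ at once. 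You instead re-derive this a priori bound by the bootstrap on $T^\ast$, using the Halanay estimate itself to preclude exit from $B(0,r^\ast)$. That route is also valid and is somewhat more robust (it does not depend on how the global solution was produced), but two remarks are in order: (i) it invokes Lemma \ref{lm-halanay} on the finite interval $[0,T^\ast]$, whereas the lemma is stated for inequalities holding on all of $(0,\infty)$; the finite-horizon version is true and follows from the same argument, since the bound \eqref{halanay3} at time $t$ only uses the hypothesis up to time $t$, but this should be said explicitly; (ii) given that Theorem \ref{th-exist-c} already delivers its solution inside a small ball, the continuation step is, strictly speaking, redundant rather than necessary. Your final deductions --- Lyapunov stability by shrinking $r^\ast\le\varepsilon$, and attraction from the $b=0$ clause of Lemma \ref{lm-halanay} --- coincide with the paper's conclusion $\|u(\cdot,t)\|\le\bigl(\tfrac{\lambda_1}{\lambda_1-\|p\|_\infty(\ell+\theta)}+1\bigr)\|\xi\|_\infty$ together with $\|u(\cdot,t)\|\to 0$ as $t\to\infty$.
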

\begin{proof}
Let $\ell=\limsup\limits_{r\to 0}\kappa(r)$. Choosing $\theta>0$ such that $\|p\|_\infty(\ell+\theta)<\lambda_1$, we can find $\eta>0$ such that $\kappa(r)\le \ell+\theta$ for all $r\in [0,2\eta]$. Reasoning as in the proof of Theorem \ref{th-exist-a} and \ref{th-exist-c}, there exists $\delta>0$ such that the problem \eqref{e1}-\eqref{e3} has a unique mild solution $u\in\mathsf{B}_\eta$ as long as $\|\xi\|_\infty\le \delta$, which is defined on $[-\tau, T]$ for all $T>0$. Moreover, one has the following estimate
\begin{align*}
\|u(\cdot,t)\| & \le \omega(t,\lambda_1)\|\xi\|_\infty + \int_0^t \omega(t-s,\lambda_1)p(s)(\ell+\theta)\|u[\xi]_\rho(\cdot,s)\|ds\\
& \le \omega(t,\lambda_1)\|\xi\|_\infty + \int_0^t \omega(t-s,\lambda_1)\|p\|_\infty(\ell+\theta)\sup_{\zeta\in [s-\rho(s),s]}\|u(\cdot,\zeta)\|ds.
\end{align*}
Employing Lemma \ref{lm-halanay} with $b(\cdot)=0, a=\|p\|_\infty(\ell+\theta)$, we obtain
\begin{align*}
\|u(\cdot,t)\| & \le \left(\frac{\lambda_1}{\lambda_1-\|p\|_\infty(\ell+\theta)}+1\right)\|\xi\|_\infty,\;\forall t\ge 0,\\
\lim_{t\to \infty}\|u(\cdot,t)\| & = 0,
\end{align*}
which imply the asymptotic stability of the zero solution of \eqref{e1}. The proof is complete.
\end{proof}
\section{Existence of decay solutions}
Our goal of this section is to prove the existence of decay solutions to the problem \eqref{e1}-\eqref{e3} under the assumption that, the nonlinearity is non-Lipschitzian and possibly superlinear. Specifically, assume that
\begin{itemize}
\item[(\textbf{F}5)] $f: \mathbb R^+\times L^2(\Omega))\to L^2(\Omega)$ is a continuous mapping such that 
\begin{align*}
\|f(t,v)\| \le p(t)G(\|v\|),\;\forall t\in\mathbb R^+, v\in L^2(\Omega),
\end{align*}
where $p\in L^1_{loc}(\mathbb R^+)$ is a nonnegative function and $G\in C(\mathbb R^+)$ is a nonnegative and  nondecreasing function such that
\begin{align}
\limsup\limits_{r\to 0} \frac{G(r)} r \cdot \sup_{t\ge 0}\int_0^t \omega(t-\tau,\lambda_1)p(\tau)d\tau <1, \label{F3a}
\end{align}
and
\begin{align}
\lim_{T\to \infty}\sup_{t\ge T}\int_0^{\frac t2}\omega(t-\tau,\lambda_1)p(\tau)d\tau=0.\label{F3b}
\end{align}
\end{itemize}
In order to study the existence of decay solutions to \eqref{e1}-\eqref{e3}, we make use of the fixed point theory for condensing maps.
\begin{definition}\cite{KOZ}\label{def-mnc}
Let $E$ be a Banach space and $\mathcal P_b(E)$ the collection of all nonempty and bounded subsets of $E$. A function $\mu: \mathcal P_b(E)\to \mathbb R^+$ is said to be a measure of noncompactness (MNC) if $\mu(\overline{\text{\rm co}}\,D)=\mu(D)$ for all $D\in\mathcal P_b(E)$, here the notation $\overline{\text{\rm co}}$ denote the closure of convex hull of subsets in $E$. An MNC is called
\begin{itemize}
\item nonsingular if $\mu(D\cup\{x\}) = \mu(D)$ for all $D\in\mathcal P_b(E)$, $x\in E$.
\item monotone if $\mu(D_1)\le \mu(D_2)$ provided that $D_1\subset D_2$.
\end{itemize}
\end{definition}
The MNC defined by 
\begin{align*}
\chi(D) = \inf\{\varepsilon>0: D\text{ admits a finite }\varepsilon-\text{net}\}
\end{align*}
is called the Hausdorff measure of noncompactness.
\begin{definition}\cite{KOZ}
Let $E$ be a Banach space and $D\in \mathcal P_b(E)$. A continuous map $\mathcal F: D\to E$ is said to be condensing with respect to MNC $\mu$ ($\mu$-condensing) iff the relation $\mu(B)\le \mu(\mathcal F(B)), B\subset D$, implies that $B$ is relatively compact. 
\end{definition}
The following theorem states a fixed point principle for condensing maps.
\begin{theorem}\cite{KOZ}\label{th-fix}
Let $\mu$ be a monotone and nonsingular MNC on $E$. Assume that $D\subset E$ is a closed convex set and $\mathcal F:D\to D$ is $\mu$-condensing. Then $\mathcal F$ admits a fixed point.
\end{theorem}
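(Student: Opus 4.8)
The plan is to reduce the statement to the classical Schauder fixed point theorem by extracting from $D$ a nonempty compact convex subset that is invariant under $\mathcal{F}$; the role of the $\mu$-condensing hypothesis is precisely to force such a canonical invariant set to be compact. Throughout I work, as in the cited source, with the understanding that the sets appearing below are bounded, so that $\mu$ is defined on them.

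First I would fix an arbitrary point $x_0\in D$ and consider the family $\Sigma$ of all closed convex subsets $K\subseteq D$ satisfying $x_0\in K$ and $\mathcal{F}(K)\subseteq K$. This family is nonempty, since $D$ itself belongs to it, and it is stable under arbitrary intersections. Setting $K_0:=\bigcap_{K\in\Sigma}K$, one checks that $K_0$ is closed and convex, contains $x_0$, and that $\mathcal{F}(K_0)\subseteq\mathcal{F}(K)\subseteq K$ for every $K\in\Sigma$, whence $\mathcal{F}(K_0)\subseteq K_0$. Thus $K_0$ is the minimal element of $\Sigma$.

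The key step is to identify $K_0$ with $\overline{\mathrm{co}}\bigl(\{x_0\}\cup\mathcal{F}(K_0)\bigr)$. Writing $K_1$ for the latter set, the inclusion $K_1\subseteq K_0$ is immediate, because $K_0$ is closed and convex and contains both $x_0$ and $\mathcal{F}(K_0)$. Conversely, $\mathcal{F}(K_1)\subseteq\mathcal{F}(K_0)\subseteq K_1$ and $x_0\in K_1$, so $K_1\in\Sigma$, and minimality gives $K_0\subseteq K_1$; hence $K_0=\overline{\mathrm{co}}\bigl(\{x_0\}\cup\mathcal{F}(K_0)\bigr)$. Now I invoke the axioms of the MNC: using the defining invariance $\mu(\overline{\mathrm{co}}\,A)=\mu(A)$ and then nonsingularity,
\begin{align*}
\mu(K_0)=\mu\bigl(\overline{\mathrm{co}}(\{x_0\}\cup\mathcal{F}(K_0))\bigr)=\mu\bigl(\{x_0\}\cup\mathcal{F}(K_0)\bigr)=\mu\bigl(\mathcal{F}(K_0)\bigr).
\end{align*}
In particular $\mu(K_0)\le\mu(\mathcal{F}(K_0))$, so the $\mu$-condensing property applied with $B=K_0$ yields that $K_0$ is relatively compact; being closed, $K_0$ is compact.

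Finally, $K_0$ is a nonempty (it contains $x_0$) compact convex set, $\mathcal{F}$ is continuous, and $\mathcal{F}(K_0)\subseteq K_0$, so the Schauder fixed point theorem produces a fixed point of $\mathcal{F}$ in $K_0\subseteq D$. I expect the only genuinely delicate point to be the self-referential construction of the fundamental set $K_0$ and the verification of the identity $K_0=\overline{\mathrm{co}}\bigl(\{x_0\}\cup\mathcal{F}(K_0)\bigr)$, together with the implicit requirement that the sets involved be bounded so that $\mu$ applies; once these are in place, the passage from the MNC axioms to compactness and the concluding appeal to Schauder's theorem are entirely mechanical.
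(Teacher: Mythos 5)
Your proposal is correct: the extraction of the minimal closed convex invariant set $K_0$, the identity $K_0=\overline{\mathrm{co}}\bigl(\{x_0\}\cup\mathcal{F}(K_0)\bigr)$, the use of the convex-hull invariance and nonsingularity of $\mu$ together with the condensing property to get compactness of $K_0$, and the final appeal to Schauder's theorem are all sound (boundedness is guaranteed since the paper's definition of a $\mu$-condensing map already requires $D\in\mathcal{P}_b(E)$). The paper itself gives no proof of this theorem --- it is quoted from the monograph of Kamenskii, Obukhovskii and Zecca --- and your argument is exactly the standard Sadovskii-type fundamental-set proof found in that source, so there is nothing to contrast.
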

Let $BC_0(\mathbb R^+;L^2(\Omega))$ be the space of continuous functions on $\mathbb R^+$, taking values in $L^2(\Omega)$ and decaying as $t\to \infty$. Given $\xi\in C([-\tau,0];L^2(\Omega))$, put $\mathcal{BC}_0^\xi=\{u\in BC_0(\mathbb R^+;L^2(\Omega)): u(\cdot, 0)=\xi(\cdot,0)\}$. Then $\mathcal{BC}_0^\xi$ with the supremum norm $\|\cdot\|_\infty$ is a closed subspace of $BC_0(\mathbb R^+;L^2(\Omega))$.

Let $D$ be a bounded set in $\mathcal{BC}_0^\xi$ and $\pi_T: \mathcal{BC}_0^\xi\to C([0,T];L^2(\Omega))$ the restriction operator on $\mathcal{BC}_0^\xi$, i.e. $\pi_T(u)$ is the restriction of $u\in \mathcal{BC}_0^\xi$ to the interval $[0,T]$. Define 
\begin{align*}
d_\infty(D) & = \lim_{T\to\infty}\sup_{u\in D}\sup_{t\ge T}\|u(\cdot,t)\|,\\
\chi_\infty(D) & = \sup_{T>0}\chi_T(\pi_T(D)),
\end{align*}
where $\chi_T(\cdot)$ is the Hausdorff MNC in $C([0,T];L^2(\Omega))$. Then the following MNC defined in \cite{AK},
\begin{equation}\label{mnc-star}
\chi^*(D) = d_\infty(D) + \chi_\infty(D),
\end{equation}
possesses all properties stated in Definition \ref{def-mnc}. In addition, if $\chi^*(D)=0$ then $D$ is relatively compact in $BC_0(\mathbb R^+;L^2(\Omega))$. 
\begin{lemma}\label{lm-radius}
Let (\textbf{F}5) hold. Then there exist positive numbers $\delta$ and $\eta$ such that for $\|\xi\|_\infty\le \delta$, the solution operator $\Phi$ obeys $\Phi(\mathsf{B}_\eta)\subset \mathsf{B}_\eta$, where $\mathsf{B}_\eta$ is the closed ball in $\mathcal{BC}_0^\xi$ centered at origin with radius $\eta$.
\end{lemma}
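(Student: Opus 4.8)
The plan is to verify the two defining requirements of $\Phi(\mathsf{B}_\eta)\subset\mathsf{B}_\eta\subset\mathcal{BC}_0^\xi$ separately: first that $\Phi(u)$ stays inside the ball of radius $\eta$ in the sup norm, and second that $\Phi(u)$ genuinely decays at infinity so that $\Phi(u)\in\mathcal{BC}_0^\xi$. For the constants I would mirror the construction in the proof of Theorem~\ref{th-exist-a}, but taking suprema over all of $\mathbb R^+$ rather than over $[0,T]$. Set $\ell=\limsup_{r\to 0}G(r)/r$ and $M=\sup_{t\ge 0}\omega(\cdot,\lambda_1)*p(t)$, which is finite by \eqref{F3a}. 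Choose $\epsilon>0$ with $(\ell+\epsilon)M<1$, then pick $\eta>0$ so that $G(r)\le(\ell+\epsilon)r$ for all $r\in[0,2\eta]$, and finally define $\delta_0$ by the same infimum expression as in Theorem~\ref{th-exist-a} and put $\delta=\min\{\delta_0,\eta\}$.

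For the boundedness part, the argument is essentially identical to that of Theorem~\ref{th-exist-a}. Using Lemma~\ref{lm-op-sol}(2), the growth bound in (\textbf{F}5), and the fact that $\|u[\xi]_\rho(\cdot,s)\|\le\|u\|_\infty+\|\xi\|_\infty\le\eta+\delta\le 2\eta$, one estimates
\[
\|\Phi(u)(\cdot,t)\|\le\omega(t,\lambda_1)\delta+(\ell+\epsilon)(\eta+\delta)\,\omega(\cdot,\lambda_1)*p(t),
\]
and the choice of $\delta_0$ forces the right-hand side to be at most $\eta$ for every $t\ge 0$, exactly as before. Thus $\|\Phi(u)\|_\infty\le\eta$.

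The substantive part is showing $\Phi(u)(\cdot,t)\to 0$ as $t\to\infty$, and I would treat the two terms of $\Phi(u)$ separately. The linear term satisfies $\|S(t)\xi(\cdot,0)\|\le\omega(t,\lambda_1)\|\xi(\cdot,0)\|\to 0$ since $\omega(t,\lambda_1)\to 0$. For the Cauchy term I would split the integral as $\int_0^{t/2}+\int_{t/2}^t$. On $[0,t/2]$ one bounds the factor $G(\|u[\xi]_\rho(\cdot,s)\|)$ by the constant $G(2\eta)$ using monotonicity of $G$, and invokes \eqref{F3b} to conclude that $\int_0^{t/2}\omega(t-s,\lambda_1)p(s)\,ds\to 0$. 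On $[t/2,t]$ I would exploit the decay of $u$ together with $s-\rho(s)\to\infty$: given $\varepsilon'>0$ there is $T_0$ with $\|u[\xi]_\rho(\cdot,s)\|\le\varepsilon'$ for $s\ge T_0$, so for $t\ge 2T_0$ the $G$-factor is at most $(\ell+\epsilon)\varepsilon'$ and the tail integral is controlled by $(\ell+\epsilon)\varepsilon' M$, which is arbitrarily small. Combining the three estimates yields $\Phi(u)(\cdot,t)\to 0$.

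The main obstacle is this decay estimate, and within it the handling of the near-diagonal piece $\int_{t/2}^t$. The difficulty is that on this interval the kernel $\omega(t-s,\lambda_1)$ is not small, so decay cannot come from the kernel; it must come instead from the smallness of $u[\xi]_\rho(\cdot,s)$, which in turn relies on the hypothesis $s-\rho(s)\to\infty$ that lets the delayed argument eventually enter the decaying regime of $u$. Condition \eqref{F3b} is precisely what dispatches the complementary piece $\int_0^{t/2}$, where the state need not be small but the kernel-weighted mass of $p$ is. Care must be taken that for large $s$ the delayed term $u[\xi]_\rho(\cdot,s)$ indeed equals $u(\cdot,s-\rho(s))$ rather than the initial datum, which is automatic once $s-\rho(s)\ge 0$.
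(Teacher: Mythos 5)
Your proposal is correct, and its skeleton (the constants $\ell$, $M$, $\epsilon$, $\eta$, $\delta_0$, $\delta$ and the invariance estimate $\|\Phi(u)\|_\infty\le\eta$) coincides with the paper's. The one genuine difference lies in how you prove the decay $\Phi(u)(\cdot,t)\to 0$. You split the convolution at the moving point $t/2$ and dispose of $\int_0^{t/2}\omega(t-s,\lambda_1)p(s)\,ds$ by invoking hypothesis \eqref{F3b}, while the smallness of the delayed state handles $\int_{t/2}^t$. The paper instead splits at a \emph{fixed} $T$, chosen so that $G(\|u[\xi](\cdot,s-\rho(s))\|)\le\varepsilon$ for $s\ge T$, and controls the near piece $\int_0^T$ without \eqref{F3b}: by monotonicity of $\omega$ one has $\omega(t-s,\lambda_1)\le\omega(t-T,\lambda_1)$ for $s\le T$, so this piece is bounded by $G(2\eta)\,\omega(t-T,\lambda_1)\int_0^T p(s)\,ds$, which vanishes as $t\to\infty$ since $\omega(\cdot,\lambda_1)$ decays and $p\in L^1_{loc}$. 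The upshot is that the paper's version of the lemma uses only \eqref{F3a}, local integrability of $p$, and the kernel's decay, keeping \eqref{F3b} in reserve for where it is truly indispensable, namely the $d_\infty$-estimate in the condensing argument of Theorem \ref{th-decay}; your version spends \eqref{F3b} already here, which is perfectly legitimate under (\textbf{F}5) and has the aesthetic advantage that the same $\int_0^{t/2}+\int_{t/2}^t$ decomposition then reappears verbatim in the proof of Theorem \ref{th-decay}, unifying the two arguments. Both routes are complete; yours is marginally less economical in hypotheses, the paper's marginally less uniform in technique.
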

\begin{proof}
Denote 
\begin{align*}
\ell = \limsup_{r\to 0}\frac{G(r)}{r},\; M=\sup_{t\ge 0}\int_0^t \omega(t-\tau,\lambda_1)p(\tau)d\tau.
\end{align*} 
Then by \eqref{F3a}, one can take $\zeta>0$ such that
\begin{equation}\label{lm-radius0}
(\ell+\zeta)M<1.
\end{equation}
Moreover, there exists $\eta>0$ such that $\frac{G(r)}{r}\le \ell+\zeta$ for all $r\in (0,2\eta]$. Recall that the solution operator $\Phi$ is defined by
\begin{align*}
\Phi(u)(\cdot, t) = S(t)\xi(\cdot,0) + \int_0^t S(t-s)f(s,u[\xi]_\rho(\cdot,s))ds,\; u\in \mathcal{BC}_0^\xi.
\end{align*}
Considering the operator $\Phi$ on $\mathsf{B}_\eta$ with $\|\xi\|_\infty\le \eta$, we have
\begin{align}
\|\Phi(u)(\cdot, t)\| & \le \omega(t,\lambda_1)\|\xi\|_\infty + \int_0^t \omega(t-\tau,\lambda_1)p(\tau)G(\|u[\xi]_\rho(\cdot,s)\|)ds.\label{lm-radius1}
\end{align}
We first check that $\Phi(u)\in \mathcal{BC}_0^\xi$, provided $u\in \mathcal{BC}_0^\xi$. It suffices to prove that $\Phi(u)(\cdot, t)\to 0$ as $t\to \infty$ in $L^2(\Omega)$. According to \eqref{lm-radius1}, one has to testify that
$$
I(t):= \int_0^t \omega(t-s,\lambda_1)p(s)G(\|u[\xi]_\rho(\cdot,s)\|)ds\to 0\text{ as } t\to \infty.
$$
Since $t-\rho(t)\to\infty$ as $t\to \infty$, we get $\|u[\xi](\cdot, t-\rho(t))\|\to 0$ as $t\to \infty$. So for any $\varepsilon>0$, there exists $T>0$ such that $G(\|u[\xi](\cdot,s-\rho(s))\|)\le \varepsilon$ for all $s\ge T$, thanks to the fact that $G$ is continuous and $G(0)=0$. Hence for $t > T$, we get
\begin{align*}
I(t) & =\left( \int_0^T + \int_T^t\right)\omega(t-s,\lambda_1)p(s)G(\|u[\xi](\cdot,s-\rho(s))\|)d\tau\\
& \le G(2\eta)\int_0^T \omega(t-s,\lambda_1)p(s)ds + \varepsilon \int_T^t \omega(t-s,\lambda_1)p(s)ds\\
& \le G(2\eta)\omega(t-T,\lambda_1)\int_0^T p(s)ds + \varepsilon M\\
& \le [G(2\eta)+M]\varepsilon,
\end{align*}
for all $t$ chosen so that
$$
\omega(t-T,\lambda_1)\int_0^T p(s)ds <\varepsilon,
$$
which is possible since $\omega(t,\lambda_1)\to 0$ as $t\to\infty$. We have proved that $\Phi(u)\in\mathcal{BC}_0^\xi$. Let
\begin{equation}\label{lm-radius2}
\delta_0=\eta \inf_{t\ge 0}\left[\big(\omega(t,\lambda_1)+(\ell+\zeta)\omega(\cdot,\lambda_1)*p(t)\big)^{-1}\big(1-(\ell+\zeta)\omega(\cdot,\lambda_1)*p(t)\big)\right],
\end{equation}
then $\delta_0>0$. Indeed, one has
\begin{align*}
\big(\omega(t,\lambda_1)+(\ell+\zeta)\omega(\cdot,\lambda_1)*p(t)\big)^{-1}& \ge [1+(\ell+\zeta)M]^{-1},\; \forall t\ge 0,
\end{align*}
then
\begin{align*}
\delta_0 & \ge \eta [1+(\ell+\zeta)M]^{-1}\inf_{t\ge 0}\left(1-(\ell+\zeta)\int_0^t \omega(t-\tau,\lambda_1)p(\tau)d\tau\right)\\
& \ge \eta [1+(\ell+\zeta)M]^{-1}\left(1-(\ell+\zeta)\sup_{t\ge 0}\int_0^t \omega(t-\tau,\lambda_1)p(\tau)d\tau\right)>0,
\end{align*}
thanks to \eqref{lm-radius0}. Choosing $\delta = \min\{\eta,\delta_0\}$, we show that $\Phi(u)\in\mathsf{B}_\eta$ provided $\|\xi\|_\infty\le \delta$. For $\|\xi\|\le \delta$, $u\in\mathsf{B}_\eta$, we get $\|u[\xi](\cdot,s)\|\le 2\eta$ for any $s\ge -\tau$. In addition,
\begin{align*}
\|\Phi(u)(\cdot,t)\| & \le \omega(t,\lambda_1)\|\xi\|_\infty + \int_0^t \omega(t-s,\lambda_1)p(s)G(\|u[\xi](\cdot,s-\rho(s))\|)ds\\
& \le \omega(t,\lambda_1)\|\xi\|_\infty + \int_0^t \omega(t-s,\lambda_1)p(s)(\ell+\zeta)\|u[\xi](\cdot,s-\rho(s))\|ds\\
& \le \omega(t,\lambda_1)\|\xi\|_\infty + \int_0^t \omega(t-s,\lambda_1)p(s)(\ell+\zeta)(\|u\|_\infty+\|\xi\|_\infty)ds\\
& \le \omega(t,\lambda_1)\|\xi\|_\infty + (\eta+\|\xi\|_\infty)(\ell+\zeta) \int_0^t \omega(t-s,\lambda_1)p(s)ds\\
& \le  \big[\omega(t,\lambda_1) + (\ell+\zeta)\omega(\cdot,\lambda_1)*p(t)\big] \|\xi\|_\infty+ \eta(\ell+\zeta)\omega(\cdot,\lambda_1)*p(t)\\
& \le  \big[\omega(t,\lambda_1) + (\ell+\zeta)\omega(\cdot,\lambda_1)*p(t)\big] \delta_0 + \eta(\ell+\zeta)\omega(\cdot,\lambda_1)*p(t)\\
& \le \eta,\;\forall t\ge 0,
\end{align*}
due to the formulation of $\delta_0$ in \eqref{lm-radius2}. Therefore $\Phi(\mathsf{B}_\eta)\subset \mathsf{B}_\eta$. The proof is complete.
\end{proof}
The following theorem represents the main result of this section.
\begin{theorem}\label{th-decay}
Let (\textbf{F}5) hold. Then there exists $\delta>0$ such that, the problem \eqref{e1}-\eqref{e3} has a compact set of decay solutions, provided $\|\xi\|_\infty \le \delta$.
\end{theorem}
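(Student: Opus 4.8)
The plan is to apply the fixed point principle for condensing maps, Theorem~\ref{th-fix}, to the solution operator $\Phi$ on the invariant ball furnished by Lemma~\ref{lm-radius}, using the measure of noncompactness $\chi^{*}=d_\infty+\chi_\infty$ from \eqref{mnc-star}. Fixing $\delta,\eta>0$ as in Lemma~\ref{lm-radius}, for $\|\xi\|_\infty\le\delta$ the set $\mathsf{B}_\eta\subset\mathcal{BC}_0^\xi$ is closed and convex and satisfies $\Phi(\mathsf{B}_\eta)\subset\mathsf{B}_\eta$. Since $\chi^{*}$ is monotone and nonsingular, by Theorem~\ref{th-fix} it remains only to verify that $\Phi$ is $\chi^{*}$-condensing on $\mathsf{B}_\eta$, after which the conclusion follows by examining the fixed point set.

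The verification splits according to the two components of $\chi^{*}$. For the $\chi_\infty$ component I would use the decomposition $\Phi(u)=S(\cdot)\xi(\cdot,0)+\mathcal Q\circ N_f(u)$ with $N_f(u)(\cdot,s)=f(s,u[\xi]_\rho(\cdot,s))$. On a bounded set $D\subset\mathsf{B}_\eta$ the arguments obey $\|u[\xi]_\rho(\cdot,s)\|\le 2\eta$, so $\|N_f(u)(\cdot,s)\|\le p(s)G(2\eta)$ is dominated by a fixed $L^1_{loc}$ majorant; the compactness of the Cauchy operator (Lemma~\ref{lm-cauchy-op}) then renders $\pi_T(\mathcal Q\circ N_f(D))$ relatively compact in $C([0,T];L^2(\Omega))$ for each $T$, while the translate $S(\cdot)\xi(\cdot,0)$ is a single fixed function that does not affect the Hausdorff MNC. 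Hence $\chi_T(\pi_T(\Phi(D)))=0$ for every $T$ and $\chi_\infty(\Phi(D))=0$. For the $d_\infty$ component I would start from the pointwise bound \eqref{lm-radius1} and split the integral at $t/2$. The part over $[0,t/2]$ is at most $G(2\eta)\sup_{t\ge T}\int_0^{t/2}\omega(t-s,\lambda_1)p(s)\,ds$, which tends to $0$ by \eqref{F3b}. For the part over $[t/2,t]$, since $t-\rho(t)\to\infty$ the quantity $\beta(t):=\inf_{s\ge t/2}(s-\rho(s))$ tends to $\infty$, so for $s\in[t/2,t]$ the delayed argument satisfies $s-\rho(s)\ge\beta(t)$; using that $G$ is nondecreasing and $\int_{t/2}^t\omega(t-s,\lambda_1)p(s)\,ds\le M$, this part is dominated by $M\,G\big(\sup_{u\in D}\sup_{\sigma\ge\beta(t)}\|u(\cdot,\sigma)\|\big)$. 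Letting $t\to\infty$ and using continuity of $G$ yields $d_\infty(\Phi(D))\le M\,G(d_\infty(D))$.

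Combining the two estimates gives $\chi^{*}(\Phi(D))\le M\,G(d_\infty(D))$. To conclude the condensing property, suppose $\chi^{*}(D)\le\chi^{*}(\Phi(D))$. Then $d_\infty(D)\le d_\infty(D)+\chi_\infty(D)=\chi^{*}(D)\le M\,G(d_\infty(D))$; writing $r=d_\infty(D)\in[0,\eta]\subset[0,2\eta]$ and recalling that $\eta$ was chosen so that $G(r)\le(\ell+\zeta)r$ with $(\ell+\zeta)M<1$, the chain $r\le M\,G(r)\le(\ell+\zeta)Mr$ forces $r=0$, i.e. $d_\infty(D)=0$. Feeding this back gives $\chi_\infty(D)\le M\,G(0)=0$ since $G(0)=0$, so $\chi^{*}(D)=0$ and $D$ is relatively compact. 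Thus $\Phi$ is $\chi^{*}$-condensing and Theorem~\ref{th-fix} supplies a fixed point in $\mathsf{B}_\eta$, hence a decay solution. Finally, the fixed point set $\mathcal S=\{u\in\mathsf{B}_\eta:\Phi(u)=u\}$ is closed by continuity of $\Phi$ and satisfies $\mathcal S=\Phi(\mathcal S)$, so the relation $\chi^{*}(\mathcal S)\le\chi^{*}(\Phi(\mathcal S))$ holds trivially and forces $\chi^{*}(\mathcal S)=0$; being closed and relatively compact, $\mathcal S$ is the desired compact set of decay solutions.

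The step I expect to be most delicate is the $d_\infty$ estimate for $\Phi(D)$: one must simultaneously exploit the decay of the ``head'' convolution via \eqref{F3b} and convert the ``tail'' contribution, through the delay relation $t-\rho(t)\to\infty$ and the monotonicity of $G$, into the clean bound $M\,G(d_\infty(D))$ uniformly over $u\in D$. A secondary technical point is confirming that the compactness of $\mathcal Q$ in Lemma~\ref{lm-cauchy-op} applies to the family $N_f(D)$, which is only dominated by $p(\cdot)G(2\eta)\in L^1_{loc}$ rather than uniformly bounded in the sup norm; this relies on reading the compactness through an integrable majorant, which is precisely the regularizing structure provided by the resolvent estimates in Lemma~\ref{lm-op-sol}.
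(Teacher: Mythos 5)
Your proposal is correct and follows essentially the same route as the paper's proof: the same invariant ball from Lemma \ref{lm-radius}, the same MNC $\chi^{*}$ with Theorem \ref{th-fix}, vanishing of $\chi_\infty(\Phi(D))$ via the compact Cauchy operator, the splitting of the convolution at $t/2$ combined with \eqref{F3b} and the delay condition $t-\rho(t)\to\infty$ to control $d_\infty(\Phi(D))$, and the same closedness-plus-condensing argument for compactness of the fixed point set. The only (immaterial) difference is that you carry the nonlinearity $G$ into the bound $d_\infty(\Phi(D))\le M\,G(d_\infty(D))$ and linearize only at the end, whereas the paper uses $G(r)\le(\ell+\zeta)r$ on $[0,2\eta]$ from the outset to obtain $d_\infty(\Phi(D))\le(\ell+\zeta)M\,d_\infty(D)$ directly.
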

\begin{proof}
Taking $\delta$ and $\mathsf{B}_\eta$ from Lemma \ref{lm-radius}, we consider the solution map $\Phi: \mathsf{B}_\eta\to \mathsf{B}_\eta$. By standard reasoning, we get that $\Phi$ is continuous. We will show that $\Phi$ is $\chi^*$-condensing. 
Let $D\subset \mathsf{B}_\rho$. Then arguing as in the proof of Theorem \ref{th-exist-a}, one has $\pi_T\circ \Phi$ is a compact mapping, i.e., $\pi_T(\Phi(D))$ is relatively compact in $C([0,T];L^2(\Omega))$. This implies $\chi_T(\pi_T(\Phi(D)))=0$ and then
$\chi_\infty(\Phi(D))=0$. We are now in a position to estimate $d_\infty(\Phi(D))$.

Let $z\in \Phi(D)$ and $u\in D$ be such that $z=\Phi (u)$. Then
\begin{align*}
\|z(\cdot,t)\| & \le \omega(t,\lambda_1)\|\xi\|_\infty + \int_0^t \omega(t-s,\lambda_1)p(s)G(\|u[\xi](\cdot,s-\rho(s))\|)ds\\
& \le \omega(t,\lambda_1)\|\xi\|_\infty + (\ell + \zeta)\int_0^t \omega(t-s,\lambda_1)p(s)\|u[\xi](\cdot,s-\rho(s))\|ds\\
& \le \omega(t,\lambda_1)\|\xi\|_\infty + (\ell + \zeta)\left(\int_0^{\frac t2}+\int_{\frac t2}^t \right)\omega(t-s,\lambda_1)p(s)\|u[\xi](\cdot,s-\rho(s))\|ds\\
& \le \omega(t,\lambda_1)\|\xi\|_\infty + 2(\ell + \zeta)\eta \int_0^{\frac t2}\omega(t-s,\lambda_1)p(s)ds\\
& \quad + \sup_{s \ge \frac t2}\|u[\xi](\cdot,s-\rho(s))\|(\ell + \zeta)\int_{\frac t2}^t \omega(t-s,\lambda_1)p(s)ds.
\end{align*}
Noting that, for given $T>0$, one can find $T_1>T$ such that $t-\rho(t)\ge T$ for all $t\ge T_1$.  So for $t\ge 2 T_1$, we have
\begin{align*}
\|z(\cdot,t)\| & \le \omega(t,\lambda_1)\|\xi\|_\infty + 2(\ell + \zeta)\eta \int_0^{\frac t2}\omega(t-s,\lambda_1)p(s)ds\\
& \quad +\sup_{s \ge T}\|u(\cdot,s)\|(\ell + \zeta)\int_0^t \omega(t-s,\lambda_1)p(s)ds\\
& \le \omega(t,\lambda_1)\|\xi\|_\infty + 2(\ell + \zeta)\eta \int_0^{\frac t2}\omega(t-s,\lambda_1)p(s)ds\\
& \quad +\sup_{u\in D}\sup_{s \ge T}\|u(\cdot,s)\|(\ell + \zeta)\int_0^t \omega(t-s,\lambda_1)p(s)ds.
\end{align*}
Then it follows that
\begin{align*}
\sup_{t\ge 2T_1}\|z(\cdot,t)\| & \le \omega(2T_1,\lambda_1)\|\xi\|_\infty + 2(\ell + \zeta)\eta \sup_{t\ge 2T_1}\int_0^{\frac t2}\omega(t-s,\lambda_1)p(s)ds\\
& \quad + \sup_{u\in D}\sup_{s \ge T}\|u(\cdot,s)\|(\ell + \zeta)M,
\end{align*}
where 
$$
M=\sup_{t\ge 0}\int_0^t \omega(t-s,\lambda_1)p(s)ds.
$$
Since $z\in \Phi(D)$ is taken arbitrarily, we get
\begin{align*}
\sup_{z\in\Phi(D)}\sup_{t\ge 2T_1}\|z(\cdot,t)\| & \le \omega(2T_1,\lambda_1)\|\xi\|_\infty + 2(\ell + \zeta)\eta \sup_{t\ge 2T_1}\int_0^{\frac t2}\omega(t-s,\lambda_1)p(s)ds\\
& \quad + \sup_{u\in D}\sup_{s \ge T}\|u(\cdot,s)\|(\ell + \zeta)M,
\end{align*}
which ensures that
\begin{align*}
d_\infty(\Phi(D))\le (\ell + \zeta)Md_\infty(D),
\end{align*}
thanks to \eqref{F3b} and the fact that $T_1\to \infty$ as $T\to \infty$. Therefore,
\begin{align*}
\chi^*(\Phi(D)) & = \chi_\infty(\Phi(D)) + d_\infty(\Phi(D)) = d_\infty(\Phi(D)) \le (\ell + \zeta)Md_\infty(D)\\
& \le (\ell + \zeta)M [d_\infty(D)+\chi_\infty(D)] = (\ell + \zeta)M\chi^*(D).
\end{align*}
Now if $\chi^*(D)\le \chi^*(\Phi(D))$ then $\chi^*(D)\le (\ell + \zeta)M\chi^*(D)$ which implies $\chi^*(D)=0$, thanks to the fact that $(\ell + \zeta)M<1$. Thus $\Phi$ is $\chi^*$-condensing and it admits a fixed point, according to Theorem \ref{th-fix}. Denote by $\mathcal D$ the fixed point set of $\Phi$ in $\mathsf{B}_\eta$. Then $\mathcal D$ is closed and $\mathcal D\subset \Phi(\mathcal D)$. Hence,
$$
\chi^*(\mathcal D)\le \chi^*(\Phi(\mathcal D))\le (\ell + \zeta)M\chi^*(\mathcal D),
$$
which ensures $\chi^*(\mathcal D)=0$ and $\mathcal D$ is a compact set. The proof is complete.
\end{proof}

\end{document}